\theoremstyle{plain}
\newtheorem{theorem}{Theorem}[section]
\newtheorem{prop}[theorem]{Proposition}
\newtheorem{lemma}[theorem]{Lemma}
\newtheorem{coro}[theorem]{Corollary}
\newtheorem{fact}[theorem]{Fact}
\theoremstyle{definition}
\newtheorem{example}[theorem]{Example}
\newtheorem{remark}[theorem]{Remark}
\newtheorem{definition}[theorem]{Definition}
\numberwithin{equation}{section}
\newcommand{\ts}{\hspace{0.5pt}}
\newcommand{\nts}{\hspace{-0.5pt}}
\DeclareMathOperator{\dens}{\mathrm{dens}}
\DeclareMathOperator{\card}{\mathrm{card}}
\DeclareMathOperator{\cent}{\mathrm{cent}}
\DeclareMathOperator{\homeo}{\mathrm{Homeo}}
\DeclareMathOperator{\norm}{\mathrm{norm}}
\DeclareMathOperator{\id}{\mathrm{id}}
\newcommand{\cB}{\mathcal{B}}
\newcommand{\cC}{\mathcal{C}}
\newcommand{\cH}{\mathcal{H}}
\newcommand{\cR}{\mathcal{R}}
\newcommand{\cS}{\mathcal{S}}
\newcommand{\cL}{\mathcal{L}}
\newcommand{\AAA}{\mathbb{A}}
\newcommand{\ZZ}{\mathbb{Z}\ts}
\newcommand{\NN}{\mathbb{N}}
\newcommand{\QQ}{\mathbb{Q}}
\newcommand{\XX}{\mathbb{X}}
\newcommand{\defeq}{\mathrel{\mathop:}=}
\newcommand{\exend}{\hfill $\Diamond$}
\newcommand{\bs}[1]{\boldsymbol{#1}}
\begin{document}

\title[On the full centraliser of Erd\H{o}s $\cB$-free shifts]{On
the full centraliser of Erd\H{o}s $\cB$-free shifts}

\author{Michael Baake}

\author{Neil Ma\~{n}ibo}

\address{Fakult\"{a}t f\"{u}r Mathematik,
  Universit\"{a}t Bielefeld,\newline \hspace*{\parindent}Postfach
  100131, 33501 Bielefeld, Germany}
\email{$\{$mbaake,cmanibo$\}$@math.uni-bielefeld.de}

\makeatletter
\@namedef{subjclassname@2020}{%
  \textup{2020} Mathematics Subject Classification}
\makeatother

\keywords{Number-theoretic shift spaces, symmetries, full centraliser
  and normaliser}

\subjclass[2020]{37B10, 11A07, 52C23}

\begin{abstract}
  The sets of $\cB$-free integers are considered with respect to
  (reversing) symmetries. It is well known that, for a large class of
  them, the centraliser of the associated $\cB$-free shift (otherwise
  known as its automorphism group) is trivial. We extend this result
  to the full centraliser, which effectively means to show that all
  self-homeomorphisms of the $\cB$-free shift that commute with some
  power of the shift are shifts themselves. This also leads to the
  result that the full normaliser agrees with the normaliser for this
  class, which is the semi-direct product of the centraliser with the
  cyclic group of order two generated by reflection.
\end{abstract}

\maketitle
\thispagestyle{empty}

\section{Introduction}\label{sec:intro}

Given a dynamical system under the action of an invertible mapping
(which generates a group isomorphic with $\ZZ$), determining its
symmetries and reversing symmetries is a natural step in its analysis;
see \cite{Lamb-3} and references therein for background. While this is
a standard tool in concrete dynamical systems, there is also a
counterpart in abstract dynamics. Here, given a \emph{topological
  dynamical system} (TDS), written as $(\XX, \ZZ)$, one is interested
in its automorphism group, which is the centraliser of
$\ZZ = \langle S \ts \rangle$ in the group $\cH \defeq \homeo (\XX)$
of self-homeomorphisms of $\XX$. This is widely studied in symbolic
dynamics, with some focus on low-complexity systems with centralisers
that are virtually $\ZZ$.

For both types of dynamical systems, one is also interested in the
presence of \emph{reversibility} \cite{Lamb-1, BR, Goodson, GdJLR},
where one asks whether the generator $S$ of $\ZZ$ is conjugated into
its inverse by an element of $\cH$. In concrete dynamics, it has long
been known that $k$-symmetries (respectively $k$-reversing symmetries,
with $k\in\NN$) are also important \cite{QL}, which refer to elements
from $\cH$ that commute with $S^k$, but possibly not with smaller
powers (respectively conjugate $S^k$ into its inverse). The
consideration of genuine $k$-symmetries has recently made its
appearance in symbolic dynamics, by determining not only the
automorphism group, but also its \emph{stabilised} extension; see
\cite{HKS, Schmieding, EJ,Jones-Baro, ES} and references therein.

On the other end of the (complexity) spectrum, there are interesting
dynamical systems of number-theoretic origin, such as the $k$-free
integers $V_{k} \subset \ZZ$, for some $2\leqslant k \in\NN$.  If
$\XX$ denotes the orbit closure of the point set $V_{k}$ in the
standard local topology, one is back to a TDS of the form
$(\XX, \ZZ)$, which can equivalently be viewed as a subshift of
$\{ 0, 1\}^{\ZZ}$.  Such shifts have interesting properties, as also
known from the visible (or primitive) lattice points \cite{Apo, BMP}
and various generalisations to $\cB$-free lattice systems \cite{BH,
  BBHLN}. The latter, in turn, are also generalisations of the
recently much-studied $\cB$-free integers \cite{Abda,DKKL}. In fact,
via the Minkowski embedding of rings of algebraic integers,
previously-studied extensions to number fields \cite{CV, BBHLN} can
also be viewed as $\cB$-free lattice systems.

It is an interesting general observation that such systems can also be
described in the setting of weak model sets \cite{TAO,BHS,Keller,KKL},
which builds on the pioneering work of Meyer \cite{Meyer} and gives
rapid access to various spectral and dynamical properties of such
systems \cite{BHS,HR}. Among these results is the statement that the
dynamical spectrum is pure point, though no eigenfunction (except the
trivial one) has a continuous representative, and closed forms for the
spectrum and for the topological entropy of such shift spaces.

Here, we are interested in another topological invariant of $k$-free
and, more generally, $\cB$-free shifts, namely their automorphism group
and its stabilised extension \cite{HKS}. We prefer the terms
\emph{centraliser} and \emph{full centraliser} (of the shift action
relative to $\cH$) for these two groups, thus fitting it into the
setting of reversing and extended symmetries \cite{Lamb-1, Baa, BRY},
which are given by the normaliser. The latter, due to an obvious
reflection symmetry, will be the semi-direct product of the
centraliser with the group $C_2$ generated by the reflection in the
origin. \vspace*{1mm}

Our main result (Theorem~\ref{thm:full-cent}) states that
non-degenerate $\cB$-free shifts of Erd\H{o}s type (to properly be
defined in Definition~\ref{def:simple}) do not only have a trivial
centraliser, but also a trivial full centraliser. At first, this
sounds a bit surprising because these systems have positive
topological entropy, and are thus somewhat opposite to low-complexity
shifts, though trivial centralisers occur for many dynamical systems
of number-theoretic origin \cite{BBHLN, BBN, cyclo}.

In the realm of positive entropy, it was recently shown
\cite[Thm.~2]{Schmieding} that two non-trivial mixing shifts of finite
type, $\XX$ and $\XX'$, with isomorphic full centralisers must have
rationally dependent topological entropies, that is,
$h_{\text{top}}(\XX)/h_{\text{top}}(\XX')\in \QQ$. This no longer
holds in the context of $\cB$-free shifts, see Eq.~\ref{eq:not-rat},
which is just another indication that such shifts are rather different
from shifts of finite type. The normaliser does not present further
constraints, so cannot restore the rationality property. Especially in
higher dimensions, the normaliser can capture important (and often
obvious) algebraic or geometric symmetries of the shift space that are
invisible for the centraliser; compare \cite{BRY, BBHLN}.

Just as positive-entropy shifts can have trivial centralisers,
zero-entropy shifts can also have large normalisers; see \cite{CP} for
a family of higher-dimensional (substitution) shifts that have this
property. It is fair to say that the spectrum of constraints that lead
to these phenomena is not yet well understood, which was one
motivation to consider the full centraliser for some number-theoretic
shifts, where positive entropy might manifest in some way. To the best
of our knowledge, the family of non-degenerate Erd\H{o}s $\cB$-free
shifts treated below is the first example with trivial full
centraliser while having positive topological entropy; compare
\cite{HKS,EJ,Schmieding}. The corresponding result also holds for the
(full) normaliser, which is $\ZZ\rtimes C_2$ for this class
(Corollary~\ref{coro:main}).

\section{Preliminaries and setup}\label{sec:initial}

Given a collection $\cB = \{ b^{}_{1}, b^{}_{2}, \dots \}$ of
positive integers, the set
\begin{equation}\label{eq:def-bfree}
  V^{}_{\cB} \, \defeq \, \{ n \in \ZZ : n \text{ is not
    divisible by any }  b \in \cB \ts \}
\end{equation}
is called the \emph{set of $\cB$-free numbers}. Since $1 \in \cB$
implies $V^{}_{\cB} = \varnothing$, we assume $1\notin\cB$. When $\cB$
is a finite set, $V^{}_{\cB}$ is periodic and thus not of particular
interest. We thus also assume that $\cB = \{ b_i : i \in \NN\} $ is
infinite and ordered increasingly, so $b_i < b_{i+1}$ holds for all
$i\in\NN$. Among the uncountably many choices, we are interested in
one particular class as follows.

\begin{definition}\label{def:simple}
  The set $\cB$ is said to be of \emph{Erd\H{o}s type}, if
  the following conditions are satisfied.
\begin{enumerate}\itemsep=2pt
\item $1 < b^{}_{1}$ and $b^{}_{i} < b^{}_{i+1}$ for all $i\in\NN$.
\item The $b_{i}$ are pairwise coprime.
\item One has $\sum_{i \in \NN} 1/b_{i} < \infty$.
\end{enumerate}   
\end{definition}

The second condition will be vital for the applicability of the
\emph{Chinese remainder theorem} (CRT) in our arguments.  Note that
the third condition is also known as \emph{thinness} in the literature
\cite{Keller-2,DKKL}.  Let $\cB$ be of Erd\H{o}s type, and
$V^{}_{\cB}$ be the corresponding $\cB$-free set. When $2\in\cB$, we
are in the degenerate situation that $V^{}_{\cB}$ lies in one coset
modulo $2$, which requires additional arguments and leads to a
different result, as we shall see.

Since we always assume that $\cB$ is fixed, we write $V$ instead of
$V^{}_{\cB}$ from now on. The left translate of $V$ by an integer $t$
is the set $V\! -t = \{ m-t : m\in V \}$, and we define
\begin{equation}\label{eq:def-X}
    \XX \, = \, \overline{ \{ V \! - t : t \in \ZZ \} } 
\end{equation}
as the orbit closure under the action of $\ZZ$, where the closure is
taken in the \emph{local topology}. Here, two subsets of $\ZZ$ are
$\frac{1}{n}$-close if they exactly agree on $[-n,n]$. The set $\XX$
is compact, with continuous action of $\ZZ$ on it by translation, so
$(\XX, \ZZ)$ is a TDS.

An alternative view emerges via an isomorphism to a symbolic subshift
of $\{ 0, 1\}^{\ZZ}$. Identifying $V$ with its characteristic
function, $1^{}_{V}$, the translation of $V$ by $1$ to the left
corresponds to the action of $S$ (the left shift) on $x = 1^{}_{V}$,
and $\XX$ becomes a subshift. The local topology used in
\eqref{eq:def-X} corresponds to the standard product topology of
$\{ 0, 1\}^{\ZZ}$.  We identify any subset $U\subseteq\ZZ$ with its
characteristic function $1^{}_{U}$. This way, the extremal subsets
$\ZZ$ and $\varnothing$ are tacitly seen as the elements $\bs{1}$ and
$\bs{0}$ in $\{ 0, 1\}^{\ZZ}$, respectively.  The analogous
correspondence exists for finite subsets in a given range. We shall
use both representations in parallel, and switch between them without
further notice to profit from the symbolic and the geometric side as
much as possible. Consistent with this, we shall use $S$ both for the
left shift (symbolically) and the left translation (geometrically),
where $S(U) = U\nts -1$ for any $U\subseteq \ZZ$.

When $\cB$ is of Erd\H{o}s type, we also call the corresponding
$\cB$-free shift Erd\H{o}s.  Once $\cB$ is given, not necessarily
Erd\H{o}s but at least with an infinite subset of pairwise coprime
integers, another interesting shift is
\[
  \AAA \, \defeq \, \{ U \subset \ZZ : \card (U \bmod b)
  \leqslant b-1 \text{ holds for all } b\in\cB \} \ts .
\]
Its members (or elements) are called \emph{admissible} sets.
One clearly has $\XX \subseteq \AAA$, because $V \in \AAA$, which
gives rise to the following property.

\begin{lemma}\label{lem:no-periods}
  If\/ $\XX$ is a\/ $\cB$-free shift, for a\/ $\cB$ with an infinite
  pairwise coprime subset, the only element of\/ $\XX$ with a
  non-trivial period is\/ $\varnothing$, so all non-empty\/ $U\in \XX$
  are non-periodic.
\end{lemma}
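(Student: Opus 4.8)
The plan is to show that any non-empty admissible set $U \in \AAA$ with a non-trivial period $p \geqslant 1$ (i.e. $U - p = U$) forces a contradiction with the admissibility condition at some suitably chosen $b \in \cB$. Since $\XX \subseteq \AAA$, this suffices.

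First I would record what periodicity means for admissible sets. If $U - p = U$ with $p \geqslant 1$, then $U$ is a union of full residue classes modulo $p$; say $U = \bigcup_{r \in R} (r + p\ts\ZZ)$ for some non-empty $R \subseteq \{0, 1, \dots, p-1\}$, where non-emptiness of $U$ gives $\card(R) \geqslant 1$. Now pick any $b \in \cB$ from the infinite pairwise coprime subset with $\gcd(b, p) = 1$ — such a $b$ exists, since at most finitely many elements of an infinite pairwise coprime family can share a common factor with the fixed integer $p$ (indeed at most one $b_i$ can be divisible by any given prime). For such a $b$, the CRT tells us that the natural map $\ZZ \to \ZZ/p\ZZ \times \ZZ/b\ZZ$ is surjective, so every residue class mod $p$ meets every residue class mod $b$. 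Consequently, each of the $p$ residue classes mod $p$ contributes elements in all $b$ residue classes mod $b$; in particular $U$ (being a non-empty union of such classes) meets every residue class mod $b$, so $\card(U \bmod b) = b$.

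This directly violates the defining inequality $\card(U \bmod b) \leqslant b - 1$ for admissible sets, unless $U = \varnothing$. Hence no non-empty $U \in \AAA$ — and a fortiori no non-empty $U \in \XX$ — can have a non-trivial period, which is the claim.

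The only mildly delicate point is the existence of a suitable $b \in \cB$ coprime to the given period $p$; but this is immediate from the hypothesis that $\cB$ contains an infinite pairwise coprime subset, since the finitely many prime divisors of $p$ can each divide at most one member of that subset, leaving infinitely many (in particular, at least one) coprime to $p$. Everything else is a one-line application of the CRT, so I expect no real obstacle here; the lemma is essentially a packaging of "admissible $\Rightarrow$ misses a residue mod $b$" against "periodic $\Rightarrow$ hits every residue mod $b$ once $b$ is coprime to the period".
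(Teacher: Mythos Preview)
Your proof is correct and follows essentially the same route as the paper: both pick $b\in\cB$ coprime to the period, observe that a non-empty periodic $U$ contains a full arithmetic progression modulo the period, and use the CRT (equivalently, that such a progression hits every coset modulo $b$) to contradict admissibility. Your justification for the existence of such a $b$ is slightly more explicit than the paper's, but the argument is otherwise identical.
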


\begin{proof}
  Clearly, every $\ZZ$-translation maps $\varnothing$ onto itself, so
  consider any non-empty $U \in \XX$. Then, we must have $m\in U$ for
  some $m\in\ZZ$. Assume that $U \nts + t = U$ for some
  $0\ne t \in \ZZ$, where we may take $t>0$ without loss of
  generality. We note that $1^{}_{U}$ is then the $t$-periodic
  repetition of the patch $[u^{}_{0}, u^{}_{1}, \ldots , u^{}_{t-1}]$,
  and at least one of the $u_i$ must equal $1$.

  Select some $b\in\cB$ with $\gcd (b,t)=1$, which exists under our
  assumptions.  Then, the arithmetic progression
  $\{ m + \ell \ts t : \ell \in \ZZ \}$ populates \emph{all} cosets
  modulo $b$. This implies that $U \notin \AAA \supseteq \XX$, which
  is a contradiction. Consequently, we must have $t=0$ as claimed.
\end{proof}

When $\cB$ is Erd\H{o}s, one actually has $\XX = \AAA$, which is the
origin of many simplifying properties.  Let us recall some of them.

\begin{prop}
  Let\/ $\XX$ be an Erd\H{o}s $\cB$-free subshift, with defining set\/
  $V\nts$. Then, the following properties hold.
\begin{enumerate}\itemsep=2pt
 \item $V$ has a dense orbit in\/ $\XX$, by construction.
 \item $V$ is a maximal element of\/ $\XX$, so no point can be added
   to\/ $V\nts $.
 \item $\XX = \AAA$, so any admissible set is automatically in\/ $\XX$. 
 \item $\XX$ is hereditary, that is, for every\/ $U\in\XX$, one has\/
   $U' \in \XX$ for all\/ $U' \subseteq U$.
 \item $h_{\textnormal{top}}(\XX)=\log(2)\prod_{b\in\cB}
   \left(1-\frac{1}{b}\right)>0$.  
\end{enumerate}  
\end{prop}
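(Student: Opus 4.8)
The plan is to take the five items in the stated order, since the first is essentially definitional, the second supplies the combinatorial input for the third, and once the identity $\XX=\AAA$ is established the remaining two follow quickly. Item~(1) holds by construction, because $\XX$ is \emph{defined} in \eqref{eq:def-X} to be the orbit closure of $V$. For item~(2), the key sub-claim is that $V$ meets every residue class modulo $b$ except $0\bmod b$, for each $b\in\cB$; equivalently $V\bmod b=\{1,\dots,b-1\}$, so $\card(V\bmod b)=b-1$. To see this, fix $b=b^{}_j\in\cB$ and $r$ with $1\le r\le b-1$, and parametrise the progression $r+b\ZZ$ by $\ell\mapsto r+b\ts\ell$: a point there is $\cB$-free iff it is divisible by no $b^{}_i$; divisibility by $b^{}_j$ never occurs since $b\nmid r$, while for $i\ne j$, using $\gcd(b^{}_i,b)=1$, divisibility by $b^{}_i$ cuts out a single residue class in $\ell$ of density $1/b^{}_i$. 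By the Davenport--Erd\H{o}s inclusion--exclusion argument (truncate the pairwise coprime moduli $\{b^{}_i:i\ne j\}$ and control the tail via thinness), the good $\ell$ form a set of density $\prod_{i\ne j}(1-1/b^{}_i)>0$, so $V\cap(r+b\ZZ)\ne\varnothing$. Granting this, if $n\notin V$ then $b\mid n$ for some $b\in\cB$, so any $U\in\XX\subseteq\AAA$ with $U\supseteq V\cup\{n\}$ meets all $b$ classes modulo $b$, i.e.\ $\card(U\bmod b)=b$, contradicting admissibility; hence $V$ is maximal in $\XX$. (This residue-class statement is also available from the standard theory of thin, pairwise coprime $\cB$.)

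Item~(3) is the core. The inclusion $\XX\subseteq\AAA$ is already recorded, so one must show $\AAA\subseteq\XX$: given $U\in\AAA$ and $n\in\NN$, we seek $t\in\ZZ$ with $(V-t)\cap[-n,n]=U\cap[-n,n]$. Write $[-n,n]=P\cup Q$ as a disjoint union, with $P=U\cap[-n,n]$ the positions we want inside $V-t$ and $Q$ those we want outside. Since $\cB$ is infinite and pairwise coprime, pick pairwise distinct, sufficiently large $b^{}_k\in\cB$ (one per $k\in Q$) and impose by the CRT that $k+t\equiv0\pmod{b^{}_k}$, which forces $k+t\notin V$; for the finitely many ``small'' $b\in\cB$ — enough of them that $(2n+1)\sum_{b\ \text{large}}1/b<1$ — impose, using that $U$ (hence $P$) is admissible so $\card(P\bmod b)\le b-1$, that $-t$ lies modulo $b$ in a class missed by $P$, so that $P+t$ avoids all multiples of $b$. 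These congruences involve pairwise coprime moduli, so by the CRT they confine $t$ to a non-empty residue class; within it, the set of ``bad'' $t$ for which some $j+t$ with $j\in P$ is divisible by a large, uncontrolled $b\in\cB$ has density at most $(2n+1)\sum_{b\ \text{large}}1/b<1$ by thinness, which leaves good $t$. Fitting these pieces together — the coprimality bookkeeping ensuring the imposed congruences are simultaneously solvable, and the thin-tail estimate ensuring the ``bad'' set cannot swallow the prescribed residue class — is what I expect to be the main obstacle of the whole proposition; alternatively one can cite the known identity $\XX=\AAA$ for Erd\H{o}s $\cB$.

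Items~(4) and~(5) are then short consequences of $\XX=\AAA$. For heredity: if $U\in\AAA$ and $U'\subseteq U$, then $\card(U'\bmod b)\le\card(U\bmod b)\le b-1$ for every $b\in\cB$, so $U'\in\AAA=\XX$. For the entropy, write $h^{}_{\textnormal{top}}(\XX)=h^{}_{\textnormal{top}}(\AAA)$ and sandwich. The lower bound $h^{}_{\textnormal{top}}(\XX)\ge\log(2)\prod_{b\in\cB}(1-1/b)$ follows from item~(4) together with the classical density formula $\dens(V)=\prod_{b\in\cB}(1-1/b)$: since $|V\cap[0,N)|\ge(\dens(V)-\varepsilon)N$ for large $N$, all $2^{(\dens(V)-\varepsilon)N}$ subsets of $V\cap[0,N)$ appear as length-$N$ words of $\XX$ by heredity. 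For the upper bound, fix a threshold $M$; every admissible word of length $N$ determines, for each $b\in\cB$ with $b\le M$, a residue class modulo $b$ it avoids, hence is supported on a set of size $N\prod_{b\in\cB,\,b\le M}(1-1/b)+O^{}_M(1)$ by the CRT and inclusion--exclusion, so the number of admissible length-$N$ words is at most $\bigl(\prod_{b\in\cB,\,b\le M}b\bigr)\ts 2^{N\prod_{b\in\cB,\,b\le M}(1-1/b)+O^{}_M(1)}$, which gives $h^{}_{\textnormal{top}}(\AAA)\le\log(2)\prod_{b\in\cB,\,b\le M}(1-1/b)$; letting $M\to\infty$ matches the lower bound. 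Positivity of the product, and hence of $h^{}_{\textnormal{top}}(\XX)$, is precisely the thinness in Definition~\ref{def:simple}(3).
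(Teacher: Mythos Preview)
Your proposal is correct, and for items~(1), (3), and (4) it follows essentially the same logic as the paper, the only difference being that you sketch the CRT-plus-thinness argument for $\AAA\subseteq\XX$ explicitly while the paper simply cites \cite[Cor.~4.2]{Abda} (your sketch is in fact the standard argument from that reference, and the bookkeeping you flag as the ``main obstacle'' does work out as you indicate).

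Two items are handled genuinely differently. For item~(2), the paper's one-line argument (``any added point must occupy the $0$ coset modulo some $b\in\cB$'') tacitly relies on the fact that $V\bmod b=\{1,\dots,b-1\}$ for every $b\in\cB$; you make this sub-claim explicit and prove it via a Davenport--Erd\H{o}s density estimate on the progression $r+b\ZZ$, which is the honest way to close that step. For item~(5), the paper invokes the weak-model-set framework and the density formula for $V$, citing \cite{BBN,HR}, whereas you give a direct, self-contained sandwich: the lower bound from heredity and $\dens(V)$, and the upper bound by fixing a cutoff $M$, counting the at most $\prod_{b\leqslant M}b$ choices of avoided residue classes, and bounding the support size by CRT. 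Your route is more elementary and avoids the model-set machinery entirely, at the cost of reproving a special case of a known entropy formula; the paper's route is shorter on the page but imports more theory.
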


\begin{proof}
  (1) is clear, and $(\XX, \ZZ)$ is an example of single orbit
  dynamics \cite{Weiss}. Property (2) follows from the definition of
  $V\nts $, as any added point must occupy the $0$ coset modulo some
  $b\in\cB$.  For the proof of Property~(3), see
  \cite[Cor.~4.2]{Abda}, while (4) is a direct consequence of (3).
  The formula for the topological entropy in (5) ultimately follows
  from the existence of the density of $V$ (relative to an averaging
  sequence of growing intervals placed symmetrically around $0$),
  which is given by
  $\dens(V)=\prod_{b\in\cB}\left(1-\frac{1}{b}\right)$, in conjunction
  with the interpretation of $V$ as a weak model set with maximal
  density; compare \cite[Thm.~5.1]{BBN} and \cite[Sec.~5.2]{HR}.
\end{proof}

\begin{remark}\label{rem:her-admin}
  For a general set $\cB$ and its shift, $\XX$, one can define the
  smallest hereditary subshift that contains $\XX$, denoted by
  $\widetilde{\XX}$. Then, one has the inclusions
\[
   \XX \, \subseteq \, \widetilde{\XX} \, \subseteq \, \AAA \ts ,
\]
which are generally strict. In \cite[Thm.~3]{Keller-2}, it was shown
that, if $\cB$ is \emph{taut} and contains an infinite pairwise
coprime subset, one has $\XX= \widetilde{\XX}$ and $\XX$ is then
hereditary; see \cite{DKKL} for the definition of tautness and
\cite{KKL, Keller-2} for equivalent characterisations. On the other
hand, one can construct examples of sets $\cB$ that are taut and have
an infinite pairwise coprime subset, but still satisfy
$\widetilde{\XX}\subsetneq \AAA$; see \cite[Ex.~2.47]{DKKL}. Since
both hereditariness and admissibility are central to our proofs, we
stick to the somewhat more restrictive Erd\H{o}s condition of
Definition~\ref{def:simple}.  Clearly, some results can be extended to
more general admissible shifts. \exend
\end{remark}

As a general strategy, we construct admissible patterns whose images
under certain mappings are no longer admissible, thus providing
restrictions on the mappings in question. Here, we exploit the
equality of the three shifts in Remark~\ref{rem:her-admin} and the
coprimality condition in Definition~\ref{def:simple} (by invoking the
CRT) to construct finite patterns, whose points can be chosen to be as
sparse as necessary while remaining admissible. This ensures that the
relevant block codes only pick up information from the finite patterns
with the pre-set modular properties we have chosen. Hereditariness and
admissibility also allow one to work with the actions of mappings on
\emph{singleton} sets, and the \emph{a posteriori} extension to the
whole shift space $\XX$.

\section{Centraliser, full centraliser, and full normaliser}

To characterise a TDS, it is useful to determine its symmetries and
reversing symmetries. Here, due to our one-dimensional setting, the
latter part is essentially trivial, as every $\cB$-free shift is
inversion symmetric, so the reversing symmetry group \cite{Baa, Ian}
is always of the form $\cR = \cS \rtimes C_2$, where
$\cS = \cent^{}_{\cH} (\langle S \ts \rangle)$ and $C_2$ is generated
by inversion.  For $\cS$, Mentzen~\cite{Mentzen} proved the following
result; see also \cite{Keller, Eden} for proofs for larger classes.

\begin{theorem}
  The centraliser of an Erd\H{o}s $\cB$-free shift is trivial, so\/
  $\cS = \langle S \ts \rangle \simeq \ZZ$.  \qed
\end{theorem}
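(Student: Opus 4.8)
The plan is to combine the Curtis--Hedlund--Lyndon (CHL) theorem with the rigidity coming from the identity $\XX=\AAA$ and the Chinese remainder theorem (CRT). So let $\phi\in\cent^{}_{\cH}(\langle S\ts\rangle)$. First I would invoke CHL: $\phi$ and $\phi^{-1}$, being homeomorphisms of $\XX$ that commute with $S$, are sliding block codes, which I may take to have a common radius $r$, with local rule $f$ for $\phi$. Since $\varnothing$ is the only $S$-periodic element of $\XX$ (Lemma~\ref{lem:no-periods}) and $\phi$ commutes with $S$, the point $\phi(\varnothing)$ is $S$-fixed, hence $\phi(\varnothing)=\varnothing$ and $f(\bs{0})=0$. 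Because $b^{}_{1}\geqslant 2$, the singleton $\{0\}$ is admissible, so $\{0\}\in\XX=\AAA$, and injectivity forces $y\defeq\phi(\{0\})\subseteq[-r,r]$ to be non-empty. Equivariance gives $\phi(\{m\})=y+m$, and, as a window of radius $r$ meets any $2r$-separated set in at most one point, $\phi$ acts copy-wise: $\phi\bigl(\{m^{}_{1},\dots,m^{}_{k}\}\bigr)=\bigsqcup_{i}(y+m^{}_{i})$ when the $m^{}_{i}$ are $2r$-separated.

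The first real step is to show $\card(y)=1$. If not, I would pick distinct $c,c'\in y$, set $d=c'-c$, and choose $b\in\cB$ with $b>\lvert d\rvert$ and $\gcd(b,d)=1$ (all but finitely many $b\in\cB$ work, by pairwise coprimality). Using the CRT I would pick widely spaced $m^{}_{1},\dots,m^{}_{b-1}$ with $m^{}_{i}\equiv 0\pmod{b'}$ for every $b'\in\cB$ below $b$ and with $\{m^{}_{i}\bmod b\}=\{0,\dots,b-2\}$. Then $U=\{m^{}_{1},\dots,m^{}_{b-1}\}$ is admissible — one coset modulo each small $b'$, and at most $b-1\leqslant b'-1$ residues modulo each $b'\in\cB$ with $b'\geqslant b$ — so $U\in\AAA=\XX$; but $\phi(U)=\bigsqcup_{i}(y+m^{}_{i})$ contains $c+j$ and $c'+j$ modulo $b$ for $j=0,\dots,b-2$, which, since $c\not\equiv c'\pmod b$, exhaust all $b$ classes, so $\card(\phi(U)\bmod b)=b$ and $\phi(U)\notin\AAA=\XX$, a contradiction. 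Hence $y=\{k\}$ for some $k$, and after replacing $\phi$ by $\psi\defeq S^{k}\circ\phi$ I am reduced to showing that a block code $\psi$ which is a self-homeomorphism of $\XX$, commutes with $S$, and fixes every singleton (as does $\psi^{-1}$) must be the identity.

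For this last part I would first note that admissibility passes to subsets, so $x\cap[-N,N]\to x$ shows the finite admissible sets are dense in $\XX=\AAA$; by continuity it is enough to prove $\psi(F)=F$ for every finite admissible $F$. Suppose some finite admissible $F^{}_{0}$ has $\psi(F^{}_{0})=G$ not a translate of $F^{}_{0}$. A short counting argument (if $G+s^{}_{1}\subseteq F^{}_{0}$ and $F^{}_{0}+s^{}_{2}\subseteq G$ then $\lvert F^{}_{0}\rvert=\lvert G\rvert$ and $G=F^{}_{0}-s^{}_{1}$) gives that no translate of $G$ lies inside $F^{}_{0}$, or no translate of $F^{}_{0}$ lies inside $G$; swapping $\psi\leftrightarrow\psi^{-1}$ and $F^{}_{0}\leftrightarrow G$ if needed, I assume the former. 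Then I would probe $\psi$ with many copies of $F^{}_{0}$: fix a large $b\in\cB$ (so $F^{}_{0}$, $G$, and all their sub-patterns embed without wrap-around modulo $b$) and let $R$ be the complement modulo $b$ of $\{-f\bmod b:f\in F^{}_{0}\}$, so that $R+(F^{}_{0}\bmod b)$ misses $0$ while $R+(G\bmod b)$ is everything (spoiling the latter would need a translate of $G$ inside $F^{}_{0}$ modulo $b$). By the CRT I place widely spaced translates $F^{}_{0}+n^{}_{i}$ with $\{n^{}_{i}\bmod b\}=R$, with $n^{}_{i}\equiv 0$ modulo every small $b'\in\cB\smallsetminus\{b\}$, and with enough of them; then $x\defeq\bigsqcup_{i}(F^{}_{0}+n^{}_{i})\in\AAA=\XX$, but $\psi(x)=\bigsqcup_{i}(G+n^{}_{i})$ has $\card(\psi(x)\bmod b)=b$, so $\psi(x)\notin\AAA=\XX$, a contradiction. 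Therefore $\psi(F)$ is a translate of $F$ for every finite admissible $F$. To finish, for non-empty finite admissible $F$ I would attach a far-away point $L$ (avoiding the finitely many single missing residues of $F$, so $F\cup\{L\}\in\AAA$); then $\psi(F\cup\{L\})=\psi(F)\cup\{L\}$ is a translate of $F\cup\{L\}$, and since $\psi(F)\neq\varnothing$ sits near the origin while $\psi(\{L\})=\{L\}$, comparing the two clusters forces the translate to be trivial, i.e.\ $\psi(F)=F$. Hence $\psi=\id$, so $\phi=S^{-k}\in\langle S\ts\rangle$, proving $\cent^{}_{\cH}(\langle S\ts\rangle)=\langle S\ts\rangle\simeq\ZZ$.

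The hard part, I expect, will be the handling of the CRT: the probing configurations must stay admissible modulo \emph{every} $b\in\cB$ at once — which is precisely where pairwise coprimality and $\XX=\AAA$ enter — while their images are forced to cover all residues modulo some large $b$. The two supporting combinatorial facts (that a non-translate image fails the sub-pattern condition in at least one direction, and that a block code fixing all singletons cannot translate a finite admissible set non-trivially) are where the hypothesis ``$\phi$ fixes every singleton'' finally does its work. I would also note that the argument is insensitive to whether $2\in\cB$, since the prime $2$ merely forces monochromaticity modulo $2$, which all the probes above respect.
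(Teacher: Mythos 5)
Your proof is correct, but note that the paper does not actually prove this statement: it is quoted from Mentzen \cite{Mentzen} and stated without proof. The right comparison is therefore with the machinery the paper develops afterwards for the \emph{full} centraliser, of which this theorem is the case $k=1$. Your first half --- singletons go to singletons, proved by planting $b-1$ well-separated points on the non-zero residues modulo a large $b\in\cB$ (admissible by the CRT and hereditariness) and observing that a two-point image would sweep out all $b$ residues --- is exactly the paper's Lemma~\ref{lem:trans-1} with the congruence conditions modulo $k$ rendered vacuous, and your use of the separation property is its Fact~\ref{fact:sep}. Your endgame genuinely differs: the paper shows (Lemma~\ref{lem:no-extra}) that the normalised map cannot turn a $0$ into a $1$, hence $H(U)\subseteq U$ (Proposition~\ref{prop:subset}), and then runs the same argument on $H^{-1}$; you instead prove that every finite admissible $F$ is sent to a translate of itself, via the complement-residue probe $R=(\ZZ/b\ZZ)\smallsetminus(-F_0\bmod b)$ together with the counting dichotomy on mutual embeddability of $F_0$ and $G$, and then kill the translate with a remote auxiliary point. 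Both routes rest on the same principle (CRT-built configurations that are admissible but whose images would occupy every residue class modulo some $b\in\cB$) and both are sound; the paper's is leaner because it works letter by letter in the block code, while yours yields the slightly stronger intermediate fact that images of finite admissible patterns are translates. Your closing observation that nothing breaks when $2\in\cB$ is also correct and consistent with the theorem as stated: the parity obstruction of Example~\ref{ex:two-is-special} only bites for $k$-symmetries with $k$ even, not for the centraliser itself.
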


This is a situation otherwise known from certain low-complexity
shifts, and indicates that the centraliser does not explore much of
the TDS. As a consequence, one can look at the $k$-symmetries
\cite{Lamb-2, QL}, as defined by
\[ 
  \cC^{(k)}_{\cH} \, \defeq \, \cent^{}_{\cH}
  ( \langle S^k \ts \rangle ) \ts ,
\]
and then at the union of these groups, 
\[
    \cC^{(\infty)}_{\cH} \, \defeq \bigcup_{k\in\NN} \cC^{(k)} .
\]
This was introduced for one-dimensional subshifts in \cite{HKS}, where
it is called the \emph{stabilised automorphism group}.  This group can
be larger than $\cS = \cC^{(1)}_{\cH}$, which reveals further
structure; see \cite{HKS} and \cite{Schmieding} in the case of sofic
shifts and mixing shifts of finite type, and \cite{Jones-Baro} for
Toeplitz shifts and odometers. However, it can still be trivial, as is
the case for Sturmian shifts; see \cite[Ex.~3.3]{HKS}.  It is our goal
to determine $\cC^{(\infty)}_{\cH}$ for Erd\H{o}s $\cB$-free shifts.

To this end, one has to find all elements $H\in\cH$ that commute with
$S^k$ for some $k\in\NN$, but possibly not with smaller powers of $S$,
where $k\in\NN$ means no restriction because $H$ commutes with $S^k$
if and only if it commutes with $S^{-k}$.  Any $H\in\cH$ of this kind
implies that $S^{}_{\nts H} = H S H^{-1}$ satisfies
$S_{\nts H}^{\ts k} = S^{k}$, so we potentially get additional
$k\ts $th roots of $S^k$ beyond $S$.

\begin{fact}\label{fact:fixed-point}
  Let\/ the assumptions be as in Lemma~$\ref{lem:no-periods}$.
  If\/ $H\in \cH$ commutes with\/ $S^k$ for some\/ $k\in\NN$, it fixes
  the empty set, so we have\/ $H (\varnothing) = \varnothing$.
\end{fact}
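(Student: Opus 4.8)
The plan is to exploit the fact that $\varnothing$ is the unique element of $\XX$ that is fixed by \emph{every} element of $\langle S\rangle$, together with a topological finiteness argument. First I would observe that, by Lemma~\ref{lem:no-periods}, $\varnothing$ is the only point of $\XX$ with a non-trivial period; in particular, it is the only point fixed by $S^k$ for a given $k\in\NN$. Indeed, if $U\in\XX$ satisfies $S^k(U)=U$, i.e.\ $U-k=U$, then $U$ has period $k\ne 0$, forcing $U=\varnothing$. So the fixed-point set $\mathrm{Fix}(S^k)=\{\varnothing\}$ is a single point for each $k$.

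Next, suppose $H\in\cH$ commutes with $S^k$. Then $H$ permutes the set $\mathrm{Fix}(S^k)$: if $S^k(U)=U$, then $S^k(H(U)) = H(S^k(U)) = H(U)$, so $H(U)\in\mathrm{Fix}(S^k)$ as well, and the same reasoning applied to $H^{-1}$ (which also commutes with $S^k$) shows $H$ restricts to a bijection of $\mathrm{Fix}(S^k)$ onto itself. Since this set is the singleton $\{\varnothing\}$, the only possibility is $H(\varnothing)=\varnothing$. This is the whole argument; no compactness or topology beyond continuity of $H$ is actually needed, since the fixed-point set is already forced to be a single point by the combinatorial Lemma~\ref{lem:no-periods}.

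I do not expect a serious obstacle here: the statement is essentially a formal consequence of Lemma~\ref{lem:no-periods} plus the elementary fact that a homeomorphism commuting with a map $T$ must preserve $\mathrm{Fix}(T)$. The only point worth stating carefully is that commuting with $S^k$ is equivalent to commuting with $S^{-k}$ (hence $H^{-1}$ also commutes with $S^k$), so that $H$ genuinely acts as a bijection and not merely a self-map on the fixed-point set — though even a self-map of a one-point set is automatically the identity, so this refinement is only cosmetic. If one wished to avoid invoking Lemma~\ref{lem:no-periods} directly, an alternative is to note that $\varnothing$ is the unique minimal (indeed the unique finite-orbit) point, but routing through the period statement is cleanest.
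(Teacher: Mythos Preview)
Your proof is correct and is essentially the same argument as the paper's: both show that $H(\varnothing)$ is a fixed point of $S^k$ (the paper by a direct one-line computation, you by phrasing it as ``$H$ preserves $\mathrm{Fix}(S^k)$''), and then invoke Lemma~\ref{lem:no-periods} to conclude that this fixed point must be $\varnothing$. Your remarks about $H^{-1}$ are, as you note yourself, unnecessary here since a self-map of a singleton is already the identity.
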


\begin{proof}
  Assume $H(\varnothing) = U$. Since $S^k (\varnothing) = \varnothing$, 
  we then obtain
\[
    U \, = \, H (\varnothing) \, = \, H \bigl( S^k (\varnothing) \bigr) 
    \, = \, S^k \bigl( H (\varnothing) \bigr) \, = \, S^k (U) \ts .
\]  
The set $U$ thus satisfies $U \nts - k = U$, which implies
$U=\varnothing$ by Lemma~\ref{lem:no-periods}.
\end{proof}

Assume that $H\in\cH$ commutes with $S^k$ for some $k\in\NN$, but
possibly not with smaller powers of $S$. Then, the
Curtis--Lyndon--Hedlund theorem has a modular variant in the sense
that $H$ is characterised by $k$ block maps,
$\varphi^{}_{m \bmod k}\colon \cL_{2\rho+1}\to \{0,1\}$, for
$0\leqslant m \leqslant k-1$ and some $\rho \in \NN_0$; compare
\cite[Lemma~3.2]{HKS}. Here, $\cL_{2\rho+1}$ is the set of finite
admissible patterns of length $2\rho+1$. In principle, the radius
could depend on $m$, but simply using the maximum of these radii means
no loss of generality. Since we will be working with point sets
throughout, let us state the following immediate but important
consequence.

\begin{fact}\label{fact:sep}
  Let\/ $H$ be a continuous mapping of the full shift\/
  $\{ 0,1 \}^{\ZZ}$ into itself that is specified by one or several
  block maps, all of radius\/ $\leqslant \rho$, with
  $H(\bs{0})=\bs{0}$. Then, the corresponding mapping on the subsets
  of\/ $\ZZ$, also called\/ $H$, satisfies\/
  $H(\varnothing) = \varnothing$ and has the following separation
  property. If\/ $U\subset \ZZ$ satisfies\/ $U = V \cup W$ such that\/
  $V$ and\/ $W$ have distance larger than\/ $2\rho$, we have\/
  $H(U) = H(V) \cup H(W)$.  \qed
\end{fact}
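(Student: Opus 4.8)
The plan is to argue coordinate by coordinate, exploiting that each block map only reads a window of radius $\rho$. First, $\varnothing$ corresponds to $\bs{0}$, so $H(\varnothing)=\varnothing$ follows at once from $H(\bs{0})=\bs{0}$. For the separation property, I would write $H$ through its block maps $\varphi^{}_{m\bmod k}$ of radius $\leqslant\rho$, so that for any $Y\subseteq\ZZ$ and any $n\in\ZZ$, membership $n\in H(Y)$ is determined by $Y\cap[n-\rho,n+\rho]$ together with the residue $n\bmod k$; crucially, that residue depends on $n$ only, and not on $Y$.

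Next, fix $n\in\ZZ$ and set $I^{}_{n}\defeq[n-\rho,n+\rho]$, a window of length $2\rho+1$. Since $V$ and $W$ have distance larger than $2\rho$, the set $I^{}_{n}$ cannot meet both of them: a point of $V\cap I^{}_{n}$ and a point of $W\cap I^{}_{n}$ would lie at distance at most $2\rho$. So at least one of $V\cap I^{}_{n}=\varnothing$ or $W\cap I^{}_{n}=\varnothing$ holds; say $W\cap I^{}_{n}=\varnothing$ (the other case is symmetric). Then $U\cap I^{}_{n}=V\cap I^{}_{n}$, whence $n\in H(U)\Leftrightarrow n\in H(V)$ by locality. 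At the same time, $W$ agrees with $\varnothing$ on $I^{}_{n}$, so $n\notin H(W)$ because $H(\varnothing)=\varnothing$. Hence $n\in H(U)\Leftrightarrow n\in H(V)\Leftrightarrow\bigl(n\in H(V)\text{ or }n\in H(W)\bigr)$. As $n$ was arbitrary, this gives $H(U)=H(V)\cup H(W)$.

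I do not expect a genuine obstacle here; this is essentially a bookkeeping argument. The one point that must not be glossed over is that the ``far'' set contributes $0$ to the coordinate under consideration, which is precisely where $H(\bs{0})=\bs{0}$ enters, and that the block map selected at position $n$ is governed by $n\bmod k$ alone, so it is the same for $U$, $V$ and $W$.
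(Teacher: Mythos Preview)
Your argument is correct and is exactly the intended one: the paper states this fact without proof (the statement ends with a \qed), and your coordinate-by-coordinate verification using locality of the block maps together with $H(\bs{0})=\bs{0}$ is the natural way to justify it. There is nothing to add.
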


\smallskip

From now on, for the remainder of the paper, we always assume that the
set $\cB$ is Erd\H{o}s in the sense of Definition~\ref{def:simple}.

\begin{lemma}\label{lem:trans-1}
  Let\/ $H\in \cH$ commute with\/ $S^k$, for some\/ $k\in\NN$. Then,
  for any\/ $0\leqslant m \leqslant k-1$, the mapping\/ $H$ acts as a 
  translation on the singleton set\/ $U_m \defeq \{ m \}$, and hence also 
  on all\/ $U_{m+ \ell k}$ with\/ $\ell\in\ZZ$.
\end{lemma}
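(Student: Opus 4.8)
The plan is to first reduce to the case $U_0 = \{0\}$ by using that $H$ commutes with $S^k$, so that the action of $H$ on $U_{m+\ell k}$ is determined by its action on $U_m$ up to a shift: since $S^{\ell k}(U_m) = U_{m+\ell k}$, we get $H(U_{m+\ell k}) = H(S^{\ell k}(U_m)) = S^{\ell k}(H(U_m))$, so it suffices to understand $H$ on the $k$ representatives $U_0, \dots, U_{k-1}$. Next, I would invoke the modular Curtis--Lyndon--Hedlund description recalled just before the lemma: $H$ is given by $k$ block maps $\varphi_{m \bmod k}$ of some radius $\rho$, and by Fact~\ref{fact:sep} (applicable since $H(\bs 0) = \bs 0$ by Fact~\ref{fact:fixed-point}) the image $H(U_m)$ is a finite set contained in a window of radius $\rho$ around $m$; in particular $H(U_m)$ is a \emph{finite} subset of $\ZZ$. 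The goal is to show this finite set is in fact a single point.

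The core of the argument is to rule out $H(U_m) = \varnothing$ and to rule out $H(U_m)$ having two or more points, and this is where I expect the real work to be. For the first: if $H(U_m) = \varnothing$ for some $m$, then using the separation property one can build, via the CRT, a large admissible configuration consisting of many well-separated translates of $U_m$ (all in the same residue class mod $k$, sitting at positions $m + \ell k$ that avoid the forbidden $0$ cosets mod each $b \in \cB$ — here coprimality and thinness give enough room), whose $H$-image is also $\varnothing$ on that whole window; then $H$ would collapse infinitely many distinct points of $\XX$ to points agreeing on arbitrarily large windows, contradicting injectivity of $H$ (one compares two admissible sets that differ only by the presence/absence of one such far-away singleton and finds their $H$-images agree on a large central block — iterating/combining this across residue classes contradicts that $H^{-1}$ is also a block map of bounded radius). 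For the second: if $H(U_m)$ contains two points at distance $d \le 2\rho$, then by the separation property $H$ applied to a configuration of $N$ widely-spaced copies of $U_m$ produces a configuration with roughly $2N$ points whose pairwise structure forces a density or an admissibility violation — concretely, one chooses the copies of $U_m$ along an arithmetic progression of step coprime to some large $b \in \cB$ so that the doubled image set $H(U)$ must occupy all cosets mod $b$, hence $H(U) \notin \AAA = \XX$, contradicting $H(\XX) \subseteq \XX$.

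The main obstacle, as usual with these $\cB$-free arguments, is the bookkeeping needed to guarantee that the finite admissible patterns one constructs (i) remain admissible after we impose the modular constraints that make the $H$-image non-admissible, and (ii) are genuinely realised inside $\XX$; this is exactly where one uses $\XX = \AAA$ together with the CRT and the thinness $\sum 1/b_i < \infty$ to have infinitely many primes $b$ available and enough slack to place points avoiding the bad cosets. Once $H(U_m)$ is known to be a single point $\{t_m\}$ for each $m$, the statement that $H$ "acts as a translation" on $U_m$ is just the definition, and the extension to all $U_{m+\ell k}$ is the shift-equivariance noted at the start; the possibility that different $m$ give different translation amounts $t_m$ is precisely what distinguishes genuine $k$-symmetries and will be addressed in the subsequent lemmas.
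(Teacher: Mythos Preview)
Your overall strategy matches the paper's: use the modular block-map description and Fact~\ref{fact:sep} to reduce to finite configurations, then build via the CRT an admissible input whose $H$-image is non-admissible. Two points deserve comment.

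First, ruling out $H(U_m)=\varnothing$ requires none of the machinery you sketch. By Fact~\ref{fact:fixed-point} one has $H(\varnothing)=\varnothing$, and since $H\in\cH$ is a bijection this already forces $H(U_m)\ne\varnothing$ whenever $U_m\ne\varnothing$. The paper dispatches this in half a line; your detour through windows and the radius of $H^{-1}$ is unnecessary.

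Second, for the two-point case the paper's construction is sharper than a bare arithmetic progression. Writing $H(U_m)\supseteq\{\ell,\ell'\}$, the paper selects $c\in\cB$ coprime to $k$ with the essential extra condition $\ell'-\ell\not\equiv 0\bmod c$, and then uses the CRT to place $c-1$ well-separated singletons $s_1,\dots,s_{c-1}$ with $s_i\equiv i\bmod c$, $s_i\equiv m\bmod k$, and $s_i\equiv 0\bmod b$ for every smaller $b\in\cB$ coprime to $k$. The input is admissible by design and misses exactly one coset mod $c$; the image fills that missing coset precisely because the offset $\ell'-\ell$ is nonzero mod $c$. Your arithmetic-progression variant can be salvaged (step a multiple of $k$ and of every smaller $b'\in\cB$, and $b>2\rho$ so that $0<|\ell'-\ell|<b$ gives the offset condition for free), but as written you have not isolated this offset condition, and without it the ``doubled'' image need not hit the missing coset, so no contradiction arises.
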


\begin{proof}
  Consider $U_m$ for a fixed $m$ between $0$ and $k-1$. Since
  $H(U_m) \ne \varnothing$ as a consequence of
  Fact~\ref{fact:fixed-point}, the image must contain some
  $U_{\ell}$. If there is only one singleton set in the image, $H$
  acts as a translation. Otherwise, we have
  $U_{\ell} \cup U_{\ell'} \subseteq H(U_m )$ for some $\ell <
  \ell'$. Select a number $c \in\cB$ that is coprime with $k$ and
  satisfies $\ell' \nts - \ell \not\equiv 0 \bmod c$, which is clearly
  possible.

  By assumption, we have $H( U_{m+k} ) = H( U_m) + k$.  Now, select
  integers $\{ s^{}_{1}, \ldots , s^{}_{c-1} \}$ with
\begin{itemize}\itemsep=2pt
\item $s_{i} \equiv i \bmod c$ for all $1\leqslant i \leqslant c-1$;
\item $s_{i} \equiv 0 \bmod b$ for every $b\in\cB$ with $b < c$
    and $\gcd (b,k)=1$;
\item $s_{i} \equiv m \bmod k$.
\end{itemize}
Since all moduli occurring in the congruence conditions are mutually
coprime, such numbers exist by the CRT. The first condition ensures
that all cosets mod $c$ except $0$ are occupied. The second restricts
the number of cosets modulo small $b$ that are coprime with $k$ to
one.  The third condition makes sure that all $s_i$ are at positions
where the same block map applies, namely $\varphi^{}_{m}$.  From this
condition, we also get $s_i \equiv m \bmod \gcd(b,k)$ for all
$b\in\cB$ that are not coprime with $k$. Consequently, for any such
$b$, the number of occupied cosets mod $b$ is at most
$\tfrac{b}{\gcd(b,k)}$ and thus less than $b$. For all remaining
$b\in\cB$, which then satisfy $b > c$, the $c-1$ points can at most
occupy $c-1 < b$ cosets mod $b$. Consequently,
$\{ s^{}_{1} , \ldots , s^{}_{c-1} \}$ is an admissible set, that is,
an element of $\AAA$.

Moreover, since there are infinitely many solutions, we can select
the points to be well separated, that is, with
$\lvert s_i - s_j \rvert > 2 \rho$ for all $i\ne j$. Then,
\[
  H \bigl( U^{}_{s^{}_{1}} \cup \cdots \cup U^{}_{s^{}_{c-1}}
  \bigr) \; = \bigcup_{1\leqslant i \leqslant c-1}\! H (U^{}_{s_i}) \ts ,
\]
where we used Fact~\ref{fact:sep}, and this image contains $c-1$
cosets, and at least one more because
$\ell'\nts - \ell \not\equiv 0 \bmod c$. Thus, the image is not
admissible, hence not in $\XX$, which contradicts the assumption
$H\in\cH$. Consequently, we have
$H (U_m) = U^{}_{0} + t(m) = U^{}_{t(m)}$ for
$0\leqslant m \leqslant k-1$. The final claim is clear because $H$
commutes with $S^k$.
\end{proof}

\begin{fact}\label{fact:trans-2}
  Given\/ $H$ and\/ $k$ as in Lemma~$\ref{lem:trans-1}$, let\/ $t(m)$
  be the positions from its proof. Then, no two of these
  positions can be congruent\/ $\bmod \, k$.
\end{fact}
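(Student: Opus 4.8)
The plan is to argue by contradiction: suppose $t(m) \equiv t(m') \bmod k$ for some $0 \leqslant m < m' \leqslant k-1$. Since $H$ commutes with $S^k$, we know $H(U_{m+\ell k}) = U_{t(m)+\ell k}$ and $H(U_{m'+\ell' k}) = U_{t(m')+\ell' k}$ for all $\ell, \ell' \in \ZZ$. The congruence $t(m)\equiv t(m') \bmod k$ means that by choosing $\ell, \ell'$ appropriately, we can arrange two distinct source points $p = m + \ell k$ and $q = m' + \ell' k$ (which can be taken arbitrarily far apart, in particular with $|p-q| > 2\rho$, and which lie in \emph{different} residue classes mod $k$ since $m \neq m'$) whose images $H(U_p)$ and $H(U_q)$ are the \emph{same} singleton set — or, more usefully, collide in a controlled way. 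Actually the cleanest route: arrange $p,q$ so that $t(m) + (p-m) = t(m') + (q-m')$, i.e. $H$ sends the two distinct singletons $U_p$ and $U_q$ to the \emph{same} singleton $U_r$.

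The key step is then to build an admissible finite configuration that contains both $U_p$ and $U_q$ (with all other structure sparse enough to be irrelevant), apply $H$, and derive a contradiction with admissibility or with $H$ being a homeomorphism. The natural tool is the same CRT construction as in the proof of Lemma~\ref{lem:trans-1}: pick a prime $c \in \cB$ coprime to $k$, and choose well-separated points $p = p_1, p_2, \dots, p_{c-1}$ all congruent to $m \bmod k$ (so that $\varphi_m$ applies to each), occupying all nonzero cosets mod $c$ and satisfying the analogous conditions mod smaller $b \in \cB$ so that the whole set is admissible. Each $p_i$ then maps under $H$ to $U_{t(m) + (p_i - m)}$, and because these are just translates of each other by the same amounts as the $p_i$, the image set $\{t(m) + (p_i - m)\}$ occupies exactly the same cosets mod $c$ as $\{p_i\}$, hence still misses exactly one coset mod $c$ — so far no contradiction. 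The extra ingredient is $q$: add one more well-separated point $q \equiv m' \bmod k$, chosen via CRT to land in the \emph{one} coset mod $c$ that $\{p_i\}$ misses, yet with image $H(U_q) = U_r$ landing in a coset mod $c$ that is \emph{already occupied} by the image of the $p_i$'s. Then $\{p_1,\dots,p_{c-1},q\}$ occupies all $c$ cosets mod $c$ while its image occupies only $c-1$; but if $r$ coincides with one of the $t(m)+(p_i-m)$, then $H$ maps a set of $c$ points to a set of $\leqslant c-1$ points, which is impossible for an injective map (recall $H$ is a homeomorphism, and the separation property of Fact~\ref{fact:sep} guarantees the image is exactly the union of the images of the singletons).

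Concretely: we want $r = t(m) + (q - m) = t(m') + (q - m')$ to equal $t(m) + (p_1 - m)$, i.e. $q - m' - (t(m') - t(m)) = p_1 - m$ forces, after using $t(m)\equiv t(m')\bmod k$, a solvable congruence for $q \bmod k$ that is compatible with $q \equiv m' \bmod k$; and simultaneously we want $q \equiv$ (the missing coset) $\bmod c$. Since $c$ is coprime to $k$, these two conditions on $q$ are independent and solvable by CRT, and the remaining conditions mod smaller $b$ can be imposed exactly as in Lemma~\ref{lem:trans-1} to keep the full $c$-point set admissible. Then $H$ of an admissible (hence in $\XX$) set of cardinality $c$ has cardinality $\leqslant c-1$, contradicting injectivity of the homeomorphism $H$. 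Hence $t(m) \not\equiv t(m') \bmod k$.

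The main obstacle I anticipate is the bookkeeping for admissibility of the $c$-point source set together with the collision conditions on $q$: one must check that imposing $q \equiv m' \bmod k$, $q \equiv (\text{missing coset}) \bmod c$, and $q \equiv 0 \bmod b$ for the relevant small $b$ coprime to $k$ are all mutually consistent — they are, because the $b$'s, $c$, and $k$ are pairwise coprime — and that the count of occupied cosets mod every $b \in \cB$ stays below $b$ for the enlarged set. The counting mod the small $b$ (those coprime to $k$) needs the same "all forced to coset $0$" trick as before; mod $b$ not coprime to $k$ it follows from the shared residue mod $\gcd(b,k)$; and mod the large $b > c$ the cardinality $c < b$ suffices. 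Once admissibility is in hand, the contradiction via non-injectivity is immediate, so the real work is purely in arranging the congruences.
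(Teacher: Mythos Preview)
Your ``cleanest route'' already finishes the proof, and it is essentially what the paper does. Once you have distinct $p\equiv m$ and $q\equiv m'\bmod k$ with $H(U_p)=H(U_q)=U_r$, you are done: the singletons $U_p$ and $U_q$ are admissible, hence elements of $\XX$, and $H$ is a bijection on $\XX$, so $U_p=U_q$ --- contradiction. No embedding into a larger configuration is needed. The paper phrases the same idea via $H^{-1}$: from $H(U_m)=U_s$ and $H(U_n)=U_{s+\ell k}$ it computes $U_n=H^{-1}(U_{s+\ell k})=H^{-1}(U_s)+\ell k=U_{m+\ell k}$ (using that $H^{-1}$ also commutes with $S^k$), forcing $n\equiv m\bmod k$.

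The CRT construction you then carry out is both unnecessary and flawed. You place $q$ in the one coset modulo $c$ that $\{p_1,\dots,p_{c-1}\}$ misses, so the source set $\{p_1,\dots,p_{c-1},q\}$ occupies \emph{all} cosets modulo $c\in\cB$ and is therefore not admissible --- your admissibility bookkeeping at the end treats the cases $b<c$ and $b>c$ but omits precisely $b=c$. Since this source is not in $\XX$, $H$ is not defined on it and the argument collapses. (One can repair it by reversing the roles --- keep the source admissible and force the \emph{image} to hit all cosets modulo $c$ --- but that remains a detour compared with the two-line injectivity argument you had already spotted.)
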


\begin{proof}
  Assume $H(U_m) = U_s$ and $H(U_n) = U_s + \ell k$, with
  $0 \leqslant m,n < k$.  Then, since $S^k$ also commutes with
  $H^{-1}$, we get
\[
  U_n \, = \, H^{-1} (U_s + \ell k) \, = \,
  H^{-1} (U_s) + \ell k \, = \, H^{-1} \bigl( H(U_m)\bigr)
  + \ell k \, = \, U_m + \ell k \ts ,
\]
which implies $n = m + \ell k$. With $0 \leqslant m,n < k$, this
is impossible for $m \ne n$.
\end{proof}

The next step will be to understand which sets of positions $t(m)$,
and hence translates, are possible, where we start with the following
subtlety for the case $2\in\cB$.

\begin{example}\label{ex:two-is-special}
  If $\cB$ is Erd\H{o}s, but with $2\in\cB$, we get $V\subset 2\ZZ+1$
  and a partition $\XX = \XX_0 \, \dot{\cup}\, \XX_1$ into two parts,
  with $\XX_0$ comprising all elements of $\XX$ with points only on
  even positions and $\XX_1 = S(\XX_0)$.  Now, define a mapping $H$ by
  $H\big|_{\XX_0} = \id$ and $H\big|_{\XX_1} = S^t$, which is an
  element of $\cH$ for any $t\in 2\ZZ$. Clearly, $H$ always commutes
  with $S^2$, but with $S$ only for $t=0$. When $t\ne 0$, the mapping
  $SH$ interchanges $\XX_0$ and $\XX_1$. So, we get non-trivial
  $2$-symmetries, for a trivial reason (the decomposition of $\XX$
  into two parts that live on the cosets modulo $2$). Other examples
  can be obtained as $S^{\ell} H$ with $\ell\in\ZZ$, and
  $\cC^{(2m)}_{\ts \cH} (\langle S \ts \rangle)$ contains another
  group isomorphic with $\ZZ$, for every $m\in\NN$.  \exend
\end{example}

\begin{remark}
  Note that a crucial element in the previous example is the
  $S$-cyclic partition of $\mathbb{X}$. We suspect that, in the more
  general setting of non-Erd\H{o}s $\cB$-free shifts (also for
  $\cB$-free lattice systems or $\cB$-free numbers in general number
  fields), this provides a mechanism for non-trivial $k$-symmetries to
  exist. For an example of a non-Erd\H{o}s $\cB$-free shift with
  non-trivial centraliser (which also happens to be a Toeplitz
  subshift), see \cite[Sec.~3.4]{DKK}.  \exend
\end{remark}

\begin{lemma}\label{lem:trans-3}
  Let\/ $\cB$ be Erd\H{o}s and let\/ $H$ and\/ $k$ be as in
  Lemma~$\ref{lem:trans-1}$. If\/ $2 \notin\cB$ or if\/ $k$ is odd,
  the mapping\/ $H$ acts as a uniform translation on the singleton
  sets\/ $U_m$, that is, for a fixed\/ $t\in\ZZ$, $H (U_m) = U_{m+t}$
  holds for all\/ $m\in\ZZ$.

  When\/ $2\in\cB$ and\/ $k$ is even, $H$ acts as a uniform
  translation on all\/ $U_{2m}$ and as one on all\/ $U_{2m+1}$, where
  the difference of these two translations may be any even integer.
\end{lemma}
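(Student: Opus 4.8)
The plan is to build on the structure already in place. By Lemma~\ref{lem:trans-1} and Fact~\ref{fact:trans-2}, we may write $H(U_j) = U_{j + a^{}_{j \bmod k}}$ for all $j\in\ZZ$, with integers $a^{}_0,\dots,a^{}_{k-1}$ such that the values $t(m) = m + a^{}_m$, $0\leqslant m < k$, form a complete set of residues modulo $k$. Everything then reduces to the claim that $a^{}_m = a^{}_n$ whenever $0\leqslant m,n<k$ and one is in the situation of the lemma, that is, $2\notin\cB$, or $k$ is odd, or ($2\in\cB$, $k$ even and $m\equiv n\bmod 2$). Granting the claim, all $a^{}_m$ agree in the first two cases, which gives the first assertion. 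In the remaining case we get a common value $a$ on the even residues and $b$ on the odd ones; since $k$ is even, $\{t(m): m\text{ even}\}$ and $\{t(m): m\text{ odd}\}$ reduce mod $k$ to the two cosets of $2\ZZ/k\ZZ$ through $a$ and through $b+1$, and these form a partition of $\ZZ/k\ZZ$ only if $a\equiv b\bmod 2$, so the two translations differ by an even integer; conversely, Example~\ref{ex:two-is-special} realises every even difference.

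To prove the claim, I would argue by contradiction, assuming $a^{}_m\neq a^{}_n$. Since $\cB$ is infinite and pairwise coprime, it is unbounded and only finitely many of its elements share a factor with $k$; so pick $p\in\cB$ with $\gcd(p,k)=1$ and $p > \max\{2, \lvert a^{}_m - a^{}_n\rvert\}$. Using the CRT, the plan is to build a set $U = \{x\}\cup\{y_1,\dots,y_{p-1}\}$ with $x\equiv m\pmod k$, all $y_i\equiv n\pmod k$, the $y_i$ lying in the $p-1$ distinct cosets $\ZZ/p\ZZ\setminus\{c\}$ modulo $p$ for some $c$, and $x\equiv c + (a^{}_n - a^{}_m)\pmod p$ (a coset $\neq c$ by the bound on $\lvert a^{}_m-a^{}_n\rvert$, hence one already met by the $y_i$), together with $x\equiv y_i\equiv 0$ modulo every $b\in\cB$ with $b\neq p$, $b\leqslant p$ and $\gcd(b,k)=1$, and all points pairwise more than $2\rho$ apart (possible since there are infinitely many solutions). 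Then Fact~\ref{fact:sep} and Lemma~\ref{lem:trans-1} give $H(U) = \{x + a^{}_m\}\cup\bigcup_i \{y_i + a^{}_n\}$; here the $y_i + a^{}_n$ cover $\ZZ/p\ZZ\setminus\{c + a^{}_n\}$ while $x + a^{}_m$ sits in the coset $c + a^{}_n$, so $H(U)$ meets all $p$ cosets modulo $p$ and is not admissible. As $H\in\cH=\homeo(\XX)$ and $\XX=\AAA$, this contradicts $H(U)\in\XX$, and the claim follows.

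The step I expect to be the real work, and where the hypotheses enter, is verifying that $U$ itself is admissible. For $b = p$ this is built in, since $U$ meets exactly the $p-1$ cosets $\neq c$; for $b\neq p$ with $b>p$ it is automatic because $\lvert U\rvert = p$; for $b\neq p$ with $b\leqslant p$ and $\gcd(b,k)=1$ it holds because all of $U$ lies in the $0$-coset. For $b\in\cB$ with $g\defeq\gcd(b,k)>1$, every point of $U$ reduces to $m$ or $n$ modulo $g$ — and all the $y_i$ reduce to $n$ — so $U$ meets at most $b/g + 1$ cosets modulo $b$, which is $< b$ as soon as $g\geqslant 3$. The one dangerous case is $g = 2$, which forces $k$ even and $2\mid b$: it cannot occur when $k$ is odd; when $2\notin\cB$ such a $b$ satisfies $b\geqslant 4$, so $U$ meets at most $b/2 + 1\leqslant b-1$ cosets; and when $2\in\cB$ with $m\equiv n\bmod 2$, all points of $U$ lie in one residue class modulo $2$ (as $g=2$ divides $k$ and $m\equiv n\bmod 2$), so $U$ meets at most $b/2 < b$ cosets. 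Hence $U\in\AAA = \XX$ for every admissible pair $(m,n)$, and the contradiction goes through. The remaining bookkeeping — that the moduli used in the CRT (namely $k$, $p$, and the finitely many $b\in\cB$ with $b\leqslant p$ coprime to $k$) are pairwise coprime, and that the solution can be spaced out by more than $2\rho$ — is routine.
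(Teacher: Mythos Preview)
Your proof is correct and follows essentially the same route as the paper's: choose a large coprime modulus $p\in\cB$ (the paper's $c$), build via the CRT an admissible configuration with $p-1$ well-separated points in one residue class mod $k$ plus one extra point in another, and force the image under $H$ to occupy all cosets modulo $p$. Your admissibility check for the moduli $b\in\cB$ with $\gcd(b,k)>1$ is the same $b/g+1$ estimate the paper uses, and you identify the same obstruction at $g=2$. The only cosmetic differences are that you compare general residues $m,n$ rather than $0,m$, and you derive the even-difference condition in the case $2\in\cB$, $k$ even, from Fact~\ref{fact:trans-2} (the $t(m)$ forming a complete residue system) instead of from invertibility directly; both are equivalent one-line observations.
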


\begin{proof}
  Assume otherwise, so consider $k>1$ together with
  $H(U^{}_{0}) = U_s$ and $H( U_m) = U_{s+m+\ell}$ for some
  $1\leqslant m < k$. If we show that only $\ell=0$ is possible, for
  any such $m$, our claim also follows for every pair $U_m$ and
  $U_{m'}$.  Assume $\ell\ne 0$.
  
  Fix a $2 < c\in\cB$ with $\gcd (c,k)=1$ and
  $\ell \not\equiv 0 \bmod c$, and select integers
  $\{ s^{}_{1}, \ldots , s^{}_{c-1} \}$ with
\begin{itemize}\itemsep=2pt
\item[(i)] $s_{i} \equiv i \bmod c$ for $1 \leqslant i \leqslant c-1$;
\item[(ii)] $s_{i} \equiv 0 \bmod k$ for all $i$ and 
    $\lvert s_{i} - s_{j} \rvert > 2 \rho $ for all $i\ne j$;
  \item[(iii)] $s_{i} \equiv 0 \bmod b$ for every $b\in\cB$ with
    $b < c$ and $\gcd(b,k)=1$.
\end{itemize}
This is possible by the CRT. Further, select an integer
$ s^{\prime}$ with
\begin{itemize}\itemsep=2pt
\item[(i$'$)] $s^{\prime} \equiv -\ell \bmod c\ts $;
\item[(ii$'$)] $s^{\prime} \equiv m \bmod k$ and 
   $\lvert s^{\prime} - s^{}_{i}\rvert > 2 \rho $ for all $i$.
 \item[(iii$'$)] $s^{\prime} \equiv 0 \bmod b$ for every $b\in\cB$
   with $b < c$ and $\gcd(b,k)=1$.
\end{itemize}

Again, this is possible by the CRT, and we then see that the union
\[
   U \, = \, U_{s^{\prime}} \; \cup \; \bigcup_{i=1}^{c-1} U_{s^{}_{i}}   
\] 
is still admissible when $k$ is odd or when $2\notin\cB$. Indeed, for
every $b\in\cB$ with $b>c$, admissibility is satisfied because
$\card (U) = c < b$. For $b=c$, observe that
$\{s^{}_{1}, \ldots , s^{}_{c-1}\}$ populates all cosets mod $c$
except $0$, while $s'$ occupies one of them a second time, as a result
of (i) and (i$'$) together with $\ell\not\equiv 0\bmod c$. Next, for
$b\in\cB$ with $b<c$ and $\gcd(b,k)=1$, only the zero coset modulo $b$
is occupied, which is fine.

So, it remains to check the numbers $b\in\cB$ with $b<c$ and
$\gcd(b,k)>1$. Note that $b\ne 2$ since we assumed that $2\notin\cB$
or that $k$ is odd. Then, by (ii) and (ii$'$), we get $s_i \equiv 0 $
and $s' \equiv m \bmod \gcd(b,k)$.  Consequently, since $b>2$ now, the
number of occupied cosets is at most
$\tfrac{b}{\gcd(b,k)} + 1 \leqslant \tfrac{b}{2} + 1 < b$, which is
what we need.

However, applying Fact~\ref{fact:sep} again, the image under $H$
occupies all cosets $\ne s \bmod c$ (coming from
$\bigcup_{i} U_{s_i}$) and the coset $s + s' + \ell \equiv s \bmod c$
(from the image of $U_{s'}$). Together, they occupy \emph{all} cosets
modulo $c$, which is impossible for $H\in\cH$. So, assuming
$\ell\ne 0$ leads to a contradiction, and we must have $\ell=0$,
proving the claim for $2\notin\cB$ or $k$ odd.

In the remaining case, $2\in\cB$ with $k$ even, we saw in
Example~\ref{ex:two-is-special} what is possible. The above argument
can still be used when $m$ is even, which then implies that we have to
deal with \emph{two} translations, namely one for $U_m$ with $m$ even
and another with $m$ odd. Without loss of generality, after replacing
$H$ by $S^t H$ with suitable $t$, we may assume $H(U_{2m})=U_{2m}$
together with $H(U_{2m+1}) = U_{2m+1} + t'$ for $m\in\ZZ$, where $t'$
must be an even integer because $H$ is invertible. Clearly, all
$t'\in2\ZZ$ are possible.
\end{proof}

\begin{remark}\label{rem:more-on-two}
  In view of Lemma~\ref{lem:trans-3} and
  Example~\ref{ex:two-is-special}, one can think of the case
  $2\in\mathcal{B}$ as a somewhat degenerate (or reducible) case,
  which we keep in the background for the rest of the paper for ease of
  presentation.

  Let us also note that one can prove Lemma~\ref{lem:trans-3} without
  Lemma~\ref{lem:trans-1} and Fact~\ref{fact:trans-2}, by assuming
  $H(U_0)\supseteq U_s$ and $H(U_m)\supseteq U_{s+m+\ell}$ and
  admitting any $m$, where the case $m=0$ then shows that $H$ sends
  $U_0$ to a singleton set. We chose the above route for pedagogic
  reasons and its resemblance to previous arguments in \cite{Mentzen,
    BBHLN}.  \exend
\end{remark}

Let $H$ and $k$ be as in Lemma~\ref{lem:trans-1}, assuming
$2\notin\cB$, and let $t$ be the uniform translation from
Lemma~\ref{lem:trans-3}. Then, $H' \defeq S^t H$ lies in $\cH$ and
still commutes with $S^k$, with the simplification that
$H' (U_m) = U_m$ holds for all $m\in\ZZ$. We now rename it as $H$ and
then assume (without loss of generality) that $H$ fixes all singleton
sets. We now need to show that $H = \id$.

As used above, $H$ is characterised by $k$ block codes, with common
radius $\rho$, and we now know that
$\varphi^{}_m (0, \ldots, 0,1, 0,\ldots , 0 ) = 1$ holds for all
$0\leqslant m \leqslant k-1$. Let $\psi$ be any of them.

\begin{lemma}\label{lem:no-extra}
  Let\/ $\cB$ be Erd\H{o}s, with\/ $2\notin \cB$.  Let\/ $H\in\cH$
  and\/ $k$ be as above, where\/ $H$ fixes all singleton sets, and
  let\/ $\psi = \varphi^{}_m$ be any of the block codes that define\/
  $H$, with\/ $0\leqslant m \leqslant k-1$.  Then, it satisfies\/
  $\psi (*,\ldots,*,0,*, \ldots, *) = 0$, where each\/ $*$ can be\/
  $0$ or\/ $1$, subject to the condition that the entire block of
  length\/ $2 \rho + 1$ is admissible.
\end{lemma}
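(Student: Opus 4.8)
The plan is to show that if some block code $\psi=\varphi^{}_m$ had a value $\psi(w^{}_{-\rho},\ldots,w^{}_{-1},0,w^{}_{1},\ldots,w^{}_{\rho})=1$ on an admissible window with central letter $0$, then we can assemble an admissible configuration whose image under $H$ fails admissibility. Fix such a ``bad'' window $w=(w^{}_{-\rho},\ldots,w^{}_{\rho})$ with $w^{}_0=0$. Let $Y\subset\{-\rho,\ldots,\rho\}\setminus\{0\}$ be the support of $w$, so $Y$ is a finite admissible set not containing $0$, and by hypothesis the point $0$ gets ``switched on'' when $H$ is applied to a copy of $Y$ placed so that $0$ sits at a position $\equiv m\bmod k$. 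First I would record the consequence of $H$ fixing all singletons together with Fact~\ref{fact:sep}: if $Y$ itself is spread out into well-separated singletons (distance $>2\rho$ apart), then $H(Y)=Y$; the interesting effect only happens when points of $Y$ are close together, and then $H(Y)$ may contain extra points, one of which (after translating so the relevant window is centred at a position $\equiv m\bmod k$) is the previously empty central site.

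The core construction is then the familiar CRT packing from the proofs of Lemma~\ref{lem:trans-1} and Lemma~\ref{lem:trans-3}. Choose a prime-like modulus $c\in\cB$ with $c$ large (in particular $c>2\rho$, $c>\max Y-\min Y$, and $c$ larger than every element of $\cB$ that is $<c$), and with $\gcd(c,k)=1$. The idea is to place $c-1$ well-separated \emph{translated copies} of the window $w$, the $i$-th copy having its central $0$-site at a position $z^{}_i$ with $z^{}_i\equiv i\bmod c$, $z^{}_i\equiv m\bmod k$, and $z^{}_i\equiv 0\bmod b$ for every $b\in\cB$ with $b<c$ and $\gcd(b,k)=1$. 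Because $c$ exceeds the diameter of $w$, the whole $i$-th copy lives in the coset $i\bmod c$, so the union $U=\bigcup_i (Y+z^{}_i)$ occupies exactly the $c-1$ nonzero cosets mod $c$; for $b=c$ this is admissible (the zero coset is free), for $b<c$ with $\gcd(b,k)=1$ all points sit in coset $0\bmod b$ (hence $\le 1<b$ cosets, after also fixing the internal offsets of $w$ suitably, which is possible since $\max Y-\min Y<b$ is false in general — see the obstacle below), for $b<c$ with $\gcd(b,k)>1$ one gets at most $\tfrac{b}{\gcd(b,k)}\cdot(\text{something})$ cosets which one bounds below $b$ exactly as before, and for $b>c$ one has $\card(U)=(c-1)\,\card(Y)$ which must be controlled — here one chooses $c$ after $\rho$ so that $b>c$ forces $b$ large, but $\card(U)$ grows with $c$, so instead one should keep $\card(Y)$ fixed and use that for $b>c$ one needs $c>$ (number of points), which fails; the correct fix is to spread the copies so that for $b>c$ the copies land in distinct cosets — automatic since they are well-separated and one can additionally impose $z^{}_i\not\equiv z^{}_j$ in the relevant residues via CRT.

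Having made $U$ admissible, the punchline is that $H(U)=\bigcup_i H(Y+z^{}_i)=\bigcup_i\bigl(H(Y)+z^{}_i\bigr)$ by Fact~\ref{fact:sep} (the copies are $>2\rho$ apart), and each $H(Y)+z^{}_i$ contains the extra point $z^{}_i$ itself, which sits in coset $i\bmod c$. Combined with the original points of $U$ — which already fill cosets $1,\ldots,c-1$ mod $c$ — we now occupy a coset a \emph{second} time; but more to the point, choosing one of the copies to instead have its centre at $z^{}_0\equiv 0\bmod c$ (while $Y+z^{}_0$ still avoids coset $0$, since $c>\operatorname{diam}(w)$ and $0\notin Y$) makes $H(U)$ occupy \emph{all} $c$ cosets mod $c$, contradicting $U\in\XX=\AAA$ being mapped into $\XX$. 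The main obstacle I anticipate is bookkeeping for the small moduli $b<c$: one must choose the internal structure of the placed window (not just its centre) to keep the count of occupied cosets mod each such $b$ strictly below $b$, which forces picking $c$ larger than all those $b$ and using the third congruence condition $z^{}_i\equiv 0\bmod b$ together with $z^{}_i\equiv m\bmod k$ to route, for $b$ not coprime to $k$, all points into few residues mod $\gcd(b,k)$ — exactly the estimate $\tfrac{b}{\gcd(b,k)}+(\text{bounded})<b$ used in Lemma~\ref{lem:trans-3}, now with the ``bounded'' term equal to the diameter of $w$ measured mod $b$, which is $<b$ precisely because $c$ (hence the gap to the next admissible modulus) was chosen large. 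Once that estimate is in place, the contradiction is immediate and the lemma follows.
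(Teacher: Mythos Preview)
Your plan contains a genuine error that breaks the construction. You assert that ``because $c$ exceeds the diameter of $w$, the whole $i$-th copy lives in the coset $i \bmod c$'', and conclude that $U=\bigcup_i (Y+z_i)$ occupies exactly the nonzero cosets modulo $c$. This is false: the copy $Y+z_i$ consists of points $z_i+y$ with $y\in Y\subset\{-\rho,\dots,\rho\}\setminus\{0\}$, and these lie in the cosets $i+y\bmod c$, not all in coset $i$. For instance, if $-1\in Y$ then $Y+z_1$ already hits coset $0\bmod c$, and with $Y=\{-1,1\}$ the union over $i=1,\dots,c-1$ covers \emph{all} cosets modulo $c$, so your $U$ is not admissible. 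The same spreading also undermines your bookkeeping for the small moduli $b<c$ with $\gcd(b,k)>1$, where you have no congruence control on $z_i\bmod b$ beyond $z_i\equiv m\bmod\gcd(b,k)$; the resulting estimate ``$\tfrac{b}{\gcd(b,k)}\cdot(\text{something})<b$'' is not justified. Finally, for $b>c$ your set has up to $2\rho\,c$ points, and the proposed CRT fix ``for the relevant residues'' cannot handle infinitely many such $b$.

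The paper avoids all of this by a much simpler device that you overlook: it uses \emph{one} copy of the bad window $U^*=Y$ together with $c-1$ well-separated \emph{singletons} $s_1,\dots,s_{c-1}$ (with $s_i\equiv i\bmod c$ and $s_i\equiv\ell\bmod b$ for some fixed $\ell\in U^*$ and all small $b\ne c$). The crucial point is that $H$ fixes every singleton set regardless of its residue modulo $k$, so no mod-$k$ conditions on the $s_i$ are needed, and the total number of points is $\le c-1+2\rho$, which immediately handles all large $b$. After translating by $m$ so that $\varphi_m$ acts at the centre of the window, the image $H(U+m)$ contains the fixed singletons $m+s_1,\dots,m+s_{c-1}$ (covering cosets $m+1,\dots,m+c-1\bmod c$) plus the new point $m$ produced by $\psi$ (coset $m\bmod c$), hence all $c$ cosets --- the desired contradiction.
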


\begin{proof}
  Recall that $\psi (0, \ldots , 0) = 0$ as a consequence of
  Fact~\ref{fact:fixed-point}, where $(0, \ldots , 0)$ corresponds to
  an empty stretch of length $2\rho+1$ in the point set formulation.
  There is nothing to show when $\rho=0$, so assume
  $\rho \geqslant 1$. Now, to the contrary of our claim, assume that
  $\psi (x, 0, y) = 1$, where at least one position in $x$ or $y$ is
  $1$. This code (of length $2\rho+1$) corresponds to a non-empty,
  finite set $U^*$ with $0\notin U^*$ and no entry larger than $\rho$
  in absolute value. Concretely, let $0\ne \ell\in U^*$, and observe
  that $U^*$ contains at most $2\rho$ elements.
   
  Now, choose some $c\in\cB$ with $c > 2\rho$, but not necessarily
  coprime with $k$. Then, select integers
  $\{ s^{}_{1}, \ldots , s^{}_{c-1} \}$ that are mutually separated,
  well separated from $U^*$, and occupy all cosets $\ne 0 \bmod c$
  such that $U = U^* \cup \{ s^{}_{1}, \ldots , s^{}_{c-1} \}$ is
  still admissible.  This is possible, by the CRT, based on arguments
  similar to the ones used before. Here, we demand
\begin{itemize}\itemsep=2pt
  \item $s_i \equiv i \bmod c$ for all $1\leqslant i \leqslant c-1$;
  \item $s_i \equiv \ell \bmod b$ for all $b\in\cB$ with
     $c \ne b \leqslant c + 2 \rho$;
  \item $\lvert s_i \rvert > 2 \rho$ for all $i$ and
     $\lvert s_i - s_j\rvert > 2 \rho$ for all $i \ne j$.
\end{itemize}
  Note that no congruence condition modulo $k$ is required, because
  $H$ fixes all singleton sets, so
  $\varphi^{}_{m} (0, \ldots, 0, 1, 0, \ldots , 0) = 1$ holds for all
  $0\leqslant m \leqslant k-1$.

  The set $U$ is admissible by design (which here also means that
  there is some $t \in \ZZ$ such that $V\! + t$ agrees with $U$ on an
  interval containing $U$, and even in such a way that all other
  points of $V\! +t$ have distance $> 2 \rho$ from $U$). It is
  constructed relative to the point $0$, thus for the block map
  $\varphi^{}_{0}$ acting at position $0$, which we now need to shift
  to $m$, so that the action of $H$ at $m$ is controlled by the block
  map $\psi = \varphi^{}_{m}$.

  We thus consider $U\nts + m$, which is still admissible. Then,
  $H(U \nts +m)$ must contain all elements from
  $m + \{ s^{}_{1}, \ldots , s^{}_{c-1} \}$, due to the behaviour of
  $H$ on well-separated singleton sets, and the point $m$ in addition
  (which comes from the image of $U^{*}\nts +m$), due to our
  assumption on the code $\psi = \varphi^{}_m$.  Together, this is not
  admissible, so $H(U\nts +m) \notin\XX$, which is impossible for
  $H\in\cH$.
\end{proof}

This lemma implies that the special $H$ under consideration cannot
turn any $0$ into a $1$, hence cannot create any new point. This has
the following immediate consequence, which corresponds to the
monotonicity considered in \cite{Eden}.

\begin{prop}\label{prop:subset}
  Let\/ $\cB$ be Erd\H{o}s, with\/ $2\notin\cB$.  If\/ $H\in\cH$ fixes
  singleton sets and commutes with\/ $S^k$, for some\/ $0\ne k\in\ZZ$,
  it satisfies\/ $H (U) \subseteq U$ for all\/ $U\in\XX$.  \qed
\end{prop}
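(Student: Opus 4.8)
The plan is to reduce the statement, via the modular Curtis--Lyndon--Hedlund description of $H$, to a purely local, pointwise assertion and then read it off from Lemma~\ref{lem:no-extra}. So fix $U\in\XX$ together with a position $n\in\ZZ$ satisfying $n\notin U$; it suffices to show $n\notin H(U)$, since $n$ is then arbitrary and this is exactly the inclusion $H(U)\subseteq U$. Without loss of generality we take $k\in\NN$, because $H$ commutes with $S^k$ if and only if it commutes with $S^{-k}$, so the setup of Lemma~\ref{lem:no-extra} applies.

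First I would invoke the modular CLH theorem: since $H$ commutes with $S^k$ and fixes $\varnothing$, it is specified by the $k$ block maps $\varphi^{}_{m\bmod k}\colon \cL_{2\rho+1}\to\{0,1\}$, so that the entry of $H(U)$ at position $n$ equals $\varphi^{}_{n\bmod k}$ evaluated on the window $(U^{}_{n-\rho},\ldots,U^{}_{n+\rho})$, recentred at $0$. Because $U\in\XX = \AAA$, this window is an admissible pattern of length $2\rho+1$, i.e.\ a genuine element of $\cL_{2\rho+1}$, so the block map is legitimately applicable to it.

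Next, since $n\notin U$, the central entry of this window is $0$. Now Lemma~\ref{lem:no-extra} tells us precisely that each defining block map $\psi=\varphi^{}_m$ (for $0\leqslant m\leqslant k-1$, using here that $H$ fixes all singleton sets) satisfies $\psi(*,\ldots,*,0,*,\ldots,*)=0$ on every admissible block of length $2\rho+1$ whose central entry is $0$. Applying this with $\psi=\varphi^{}_{n\bmod k}$, the entry of $H(U)$ at position $n$ is $0$, that is, $n\notin H(U)$. As $n$ ranged over all positions outside $U$, we conclude $H(U)\subseteq U$ for every $U\in\XX$.

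The only point needing a word of care --- and the closest thing to an obstacle, though a mild one --- is ensuring that the finite window really qualifies as an admissible block so that Lemma~\ref{lem:no-extra} may be quoted; this is immediate from $U\in\XX=\AAA$, since any subword of an admissible configuration occurs inside an element of $\XX$ and is hence itself admissible. No appeal to hereditariness or to the separation property of Fact~\ref{fact:sep} is required, the argument being entirely local once the block-map description is in hand.
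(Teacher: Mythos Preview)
Your argument is correct and is precisely the route the paper takes: the proposition is stated as an immediate consequence of Lemma~\ref{lem:no-extra} (``$H$ cannot turn any $0$ into a $1$''), and you have simply spelled out that implication in detail via the modular block-map description.
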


Consider $H (U) \subseteq U$.  Clearly, $H^{-1}$ is another element of
$\cH$ that commutes with $S^k$, so we may conclude from
Lemma~\ref{lem:trans-3} that there is a fixed $t'\in\ZZ$ such that
$H^{-1} (U_m) = U_m + t'$ holds for all $m\in\ZZ$. This implies
\[
  U_m + t' \, = \, H^{-1} (U_m) \, = \, H^{-1} \bigl( H(U_m)
  \bigr) \, = \, U_m \ts ,
\]
which means $t' \nts =0$ and hence $H^{-1} (U) \subseteq U$ by
Proposition~\ref{prop:subset}, as well as $U \subseteq
H(U)$. Consequently, $U \subseteq H (U) \subseteq U$ and thus
$H(U) = U$ for all $U\in \XX$, which means $H = \id$.

Returning to our original $H$, we conclude that it was some power of
$S$, which applies to \emph{any} $H\in\cH$ that commutes with $S^k$
for some $k\in\NN$. We have arrived at our main result.

\begin{theorem}\label{thm:full-cent}
  If\/ $\XX$ is an Erd\H{o}s\/ $\cB$-free shift, with\/ $2\notin\cB$,
  its centraliser is trivial, as is the full centraliser, so we have\/
  $\, \cent^{(\infty)}_{\cH} (\langle S \ts \rangle ) = \cent^{}_{\cH}
  ( \langle S \ts \rangle ) = \langle S \ts \rangle \simeq \ZZ$.  \qed
\end{theorem}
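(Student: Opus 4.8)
The plan is to assemble the sequence of lemmas already established in the excerpt into the final statement, so almost all the work has been front-loaded. Let $H\in\cH$ be an arbitrary element of the full centraliser, so $H$ commutes with $S^k$ for some $k\in\NN$; as noted, we may take $k>0$. Since $2\notin\cB$, Lemma~\ref{lem:trans-3} applies and yields a fixed $t\in\ZZ$ with $H(U_m)=U_{m+t}$ for all $m\in\ZZ$. Replacing $H$ by $H'\defeq S^t H$ (which still lies in $\cH$ and still commutes with $S^k$), we obtain a map fixing every singleton set $U_m$. Rename $H'$ back to $H$; it now suffices to show that this normalised $H$ is the identity, since then the original map is $S^{-t}$, a power of the shift, and thus $\cent^{(\infty)}_{\cH}(\langle S\rangle)=\langle S\rangle$.

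For the normalised $H$, Proposition~\ref{prop:subset} gives $H(U)\subseteq U$ for all $U\in\XX$ — this is the heart of the matter, and it rests on Lemma~\ref{lem:no-extra}, whose proof is the genuinely delicate step: one must show the block codes $\varphi_m$ defining $H$ cannot turn a $0$ into a $1$, which is done by a CRT construction embedding any putative nonzero-creating pattern inside an admissible block that gets pushed out of $\AAA$ under $H$. Granting Proposition~\ref{prop:subset}, I then run the same argument on $H^{-1}$. The inverse also lies in $\cH$ and commutes with $S^k$, so Lemma~\ref{lem:trans-3} furnishes a fixed $t'$ with $H^{-1}(U_m)=U_m+t'$; composing with $H$ (which fixes singletons) forces
\[
  U_m+t' \,=\, H^{-1}(H(U_m)) \,=\, H^{-1}(U_m) \,=\, U_m+t',
\]
hmm — more directly, $U_m = H^{-1}(H(U_m)) = H^{-1}(U_m) = U_m+t'$, whence $t'=0$ and $H^{-1}$ also fixes all singleton sets. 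Applying Proposition~\ref{prop:subset} to $H^{-1}$ gives $H^{-1}(U)\subseteq U$, equivalently $U\subseteq H(U)$, for all $U\in\XX$. Combining the two inclusions, $U\subseteq H(U)\subseteq U$, so $H(U)=U$ for every $U\in\XX$, i.e.\ $H=\id$.

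This proves the full centraliser is trivial; the centraliser, being a subgroup, is then also trivial (alternatively it is Mentzen's theorem, already cited). The equality $\cent^{(\infty)}_{\cH}(\langle S\rangle)=\cent^{}_{\cH}(\langle S\rangle)=\langle S\rangle\simeq\ZZ$ follows. The main obstacle is entirely contained in the lemmas fed into this argument — specifically the admissibility/CRT bookkeeping in Lemmas~\ref{lem:trans-3} and~\ref{lem:no-extra} — so at the level of Theorem~\ref{thm:full-cent} itself there is no real difficulty beyond correctly chaining $H$ and $H^{-1}$ through the singleton-fixing normalisation and the two opposite inclusions. One should only be careful that the normalisation step (passing from $H$ to $S^t H$) is harmless: it changes $H$ by a shift, which is exactly what we are allowed to do since the target group $\langle S\rangle$ contains all shifts.
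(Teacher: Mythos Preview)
Your proposal is correct and follows essentially the same route as the paper: normalise $H$ by a shift so that it fixes all singletons, invoke Proposition~\ref{prop:subset} for the inclusion $H(U)\subseteq U$, then run the identical argument on $H^{-1}$ (showing via Lemma~\ref{lem:trans-3} that its singleton translation $t'$ must vanish) to get the reverse inclusion and conclude $H=\id$. The only cosmetic difference is that the paper writes the key identity as $U_m+t'=H^{-1}(U_m)=H^{-1}(H(U_m))=U_m$, which is exactly your corrected version after the ``hmm''.
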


Let us revisit Example~\ref{ex:two-is-special}. When $k\in\NN$ is odd,
Lemma~\ref{lem:trans-3} applies, wherefore we obtain
$\cent^{(k)}_{\cH} (\langle S \ts \rangle) = \langle S \ts \rangle$
for any such $k$. When $2\in\cB$ and $k$ is even, we have to deal with
two translations, as also covered by Lemma~\ref{lem:trans-3}, where we
may assume $H (U_m) = U_m$ for all even $m$ without loss of
generality. Then, one needs to run the arguments from the proof of
Lemma~\ref{lem:no-extra} twice, namely for $\ell$ even or odd,
respectively. This leads to a variant of
Proposition~\ref{prop:subset}, where $H(U)\subseteq U$ for all
$U\in\XX_0$ and $H(U) \subseteq U+t$ for all $U\in\XX_1$. Its inverse
must then satisfy $H^{-1}(U)\subseteq U$ for $U\in\XX_0$ and
$H^{-1}(U) \subseteq U-t$ for $U\in\XX_1$. This gives
$H\big|_{\XX_0} = \id$ together with $H\big|_{\XX_1} = S^{-t}$ for an
even $t$, which holds for every even value of $k$.  Let $H_2$ denote
this mapping for $t=2$; compare Example~\ref{ex:two-is-special}. Then,
we have
$\cent^{(k)}_{\cH} (\langle S \ts \rangle) = \langle H_2\rangle \times
\langle S \ts \rangle \simeq \ZZ \times \ZZ$, for any even
$k\in\NN$. This can be summarised as follows.

\begin{coro}
  Let\/ $\cB$ be Erd\H{o}s, with\/ $2 \in\cB$. Then, there are no\/
  genuine $k$-symmetries when\/ $k$ is odd, while\/ $k$ even admits
  the ones from Example~$\ref{ex:two-is-special}$. The centraliser is
  still trivial, but the full centraliser is\/
  $\,\cC^{(\infty)}_{\cH} (\langle S \ts \rangle ) \simeq
  \ZZ\times\ZZ$.  \qed
\end{coro}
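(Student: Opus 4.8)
The plan is to split the proof according to the parity of $k$ and, in each case, to reuse the machinery already developed for the case $2\notin\cB$, keeping track of the one extra structural constraint coming from $2\in\cB$, namely that every admissible set lies in a single residue class modulo $2$ (and, correspondingly, that $\XX$ decomposes as $\XX=\XX_0\,\dot{\cup}\,\XX_1$ with $\XX_1=S(\XX_0)$, as in Example~\ref{ex:two-is-special}).

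For odd $k$, the hypothesis ``$2\notin\cB$ or $k$ odd'' of Lemma~\ref{lem:trans-3} is met, so any $H\in\cent^{(k)}_{\cH}(\langle S\ts\rangle)$ acts as a uniform translation on all singleton sets; after replacing $H$ by $S^{t}H$ for a suitable $t$, we may assume it fixes every $U_m$. From here the proofs of Lemma~\ref{lem:no-extra}, of Proposition~\ref{prop:subset}, and of the concluding sandwich $U\subseteq H(U)\subseteq U$ go through verbatim, since they invoke the assumption $2\notin\cB$ only through Lemma~\ref{lem:trans-3}. Hence $H=\id$ and $\cent^{(k)}_{\cH}(\langle S\ts\rangle)=\langle S\ts\rangle$ for every odd $k$; in particular there are no genuine $k$-symmetries with $k$ odd.

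For even $k$, Lemma~\ref{lem:trans-3} instead yields two translations, and after composing $H$ with a suitable power of $S$ we may assume $H(U_{2m})=U_{2m}$ for all $m$ and $H(U_{2m+1})=U_{2m+1}+t$ for all $m$, with $t\in 2\ZZ$. The substitute for Lemma~\ref{lem:no-extra} is that such an $H$ creates no new points on either piece of the partition: running the CRT construction of that proof for a putative extra point at an even position (using the block maps $\varphi^{}_m$ with $m$ even, which fix the relevant even singletons) rules out new points for $U\in\XX_0$, and running it for an odd position does the same for $U\in\XX_1$, the only new subtlety being that the test set now has to be chosen inside a single residue class modulo $2$ so as to remain admissible when $2\in\cB$ (which is possible since any large $c\in\cB$ is coprime to $2\in\cB$). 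This yields $H(U)\subseteq U$ for $U\in\XX_0$ and $H(U)\subseteq U+t$ for $U\in\XX_1$; applying the same to $H^{-1}$, which also commutes with $S^k$, fixes the even singletons and translates the odd singletons by $-t$, and sandwiching as before, gives $H\big|_{\XX_0}=\id$ and $H\big|_{\XX_1}=S^{-t}$ for an even $t$. Conversely, every such map, and more generally $S^{\ell}$ composed with it for $\ell\in\ZZ$, lies in $\cH$ and commutes with $S^k$. A direct bookkeeping of these maps then identifies $\cent^{(k)}_{\cH}(\langle S\ts\rangle)$ with $\langle H_2\rangle\times\langle S\ts\rangle\simeq\ZZ\times\ZZ$ for every even $k$, where $H_2$ is the map of Example~\ref{ex:two-is-special} with $t=2$. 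Since $\cent^{}_{\cH}(\langle S\ts\rangle)=\cent^{(1)}_{\cH}(\langle S\ts\rangle)=\langle S\ts\rangle$ is trivial, since all even $k$ produce the same group, and since that group contains $\langle S\ts\rangle$, taking the union over all $k$ gives $\cent^{(\infty)}_{\cH}(\langle S\ts\rangle)\simeq\ZZ\times\ZZ$.

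I expect the main obstacle to be the second part: arranging the two reruns of the proof of Lemma~\ref{lem:no-extra} so that the CRT simultaneously satisfies the congruences modulo the large $c\in\cB$, modulo $2$, modulo $k$, and modulo the small $b\in\cB$, while still delivering well-separated admissible test sets, and then matching parities correctly so that the point the block map would create genuinely violates admissibility; once this one-sided statement is in hand on both $\XX_0$ and $\XX_1$, the sandwich step and the final group-theoretic identification for even $k$ are routine, if slightly fiddly, verifications.
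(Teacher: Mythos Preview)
Your proposal is correct and follows essentially the same route as the paper's argument preceding the corollary: for odd $k$, invoke the odd-$k$ clause of Lemma~\ref{lem:trans-3} and rerun Lemma~\ref{lem:no-extra} together with the sandwich step; for even $k$, use the two-translation clause of Lemma~\ref{lem:trans-3}, normalise so that $H$ fixes the even singletons, run the Lemma~\ref{lem:no-extra} construction separately on $\XX_0$ and $\XX_1$, and sandwich with $H^{-1}$ to obtain $H\big|_{\XX_0}=\id$ and $H\big|_{\XX_1}=S^{-t}$, whence $\cC^{(k)}_{\cH}=\langle H_2\rangle\times\langle S\ts\rangle$. The paper phrases the two reruns tersely as ``for $\ell$ even or odd'' (with $\ell$ the distinguished element of $U^*$ from the proof of Lemma~\ref{lem:no-extra}), which is exactly your even/odd position split, and your extra remark that the test set must be kept in a single residue class modulo $2$ is the one detail the paper leaves implicit.
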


Here, we also know that
$\norm^{}_{\cH} (\langle S \ts \rangle) = \cent^{}_{\cH} (\langle S
\ts \rangle ) \rtimes C_2$ with $C_2 = \langle R \ts \rangle$, where
$R$ is the reflection in the origin. Since Theorem~\ref{thm:full-cent}
in particular establishes the absence of any non-trivial (or genuine)
$k$-symmetry when $2\notin \cB$, we may conclude that there are also
no genuine $k$-reversing symmetries, compare \cite{Lamb-2,Lamb-3} for
background, and we thus have the following.

\begin{coro}\label{coro:main}
  Erd\H{o}s\/ $\cB$-free shifts are reversible, with trivial
  centraliser. When\/ $2\notin\cB$, the full group of
  $($reversing$\ts\ts )$ symmetries contains no further element,
  so
\[
   \pushQED{\qed}
   \norm^{(\infty)\vphantom{t^{\hat{I}}}}_{\cH} (\langle S \ts \rangle )
   \, = \, \norm^{}_{\cH} ( \langle S \ts \rangle ) \, = \,
   \langle S \ts \rangle \rtimes \langle R \ts \rangle
   \, \simeq \, \ZZ \rtimes C_2 \ts .
\qedhere \popQED
\]
\end{coro}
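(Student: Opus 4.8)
The plan is to deduce Corollary~\ref{coro:main} from Theorem~\ref{thm:full-cent} together with the obvious inversion symmetry, by running the standard $k$-symmetry/$k$-reversing-symmetry dichotomy. First I would recall the general structural fact from the theory of reversing symmetries \cite{Lamb-1,Baa,BRY}: if $R\in\cH$ conjugates $S$ into $S^{-1}$, then the normaliser $\norm^{}_{\cH}(\langle S\rangle)$ is the semi-direct product $\cent^{}_{\cH}(\langle S\rangle)\rtimes\langle R\rangle$, and likewise at the stabilised level, $\norm^{(\infty)}_{\cH}(\langle S\rangle)=\cent^{(\infty)}_{\cH}(\langle S\rangle)\rtimes\langle R\rangle$, \emph{provided} there are no genuine $k$-reversing symmetries for $k>1$ that fail to descend to an ordinary reversing symmetry. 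So the corollary reduces to two inputs: (a) every Erd\H{o}s $\cB$-free shift is reversible, and (b) when $2\notin\cB$, there are no genuine $k$-reversing symmetries.

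For (a), I would exhibit the reflection in the origin, $R(U)\defeq -U=\{-n:n\in U\}$, equivalently $R(x)_n=x_{-n}$ symbolically. Since $V^{}_{\cB}$ is visibly symmetric under $n\mapsto -n$ (divisibility by $b$ is preserved), $R$ maps $\XX$ onto itself, is a homeomorphism, and satisfies $RSR^{-1}=S^{-1}$, so $R\in\norm^{}_{\cH}(\langle S\rangle)$ with $R^2=\id$; this gives reversibility and the $C_2$ factor. For (b), suppose $G\in\cH$ satisfies $GS^kG^{-1}=S^{-k}$ for some $k\in\NN$, i.e.\ $G$ is a genuine $k$-reversing symmetry. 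Then $H\defeq RG$ satisfies $HS^kH^{-1}=RGS^kG^{-1}R^{-1}=RS^{-k}R^{-1}=S^k$, so $H\in\cent^{(k)}_{\cH}(\langle S\rangle)=\cC^{(k)}_{\cH}$. But Theorem~\ref{thm:full-cent} (in the form established along the way: $\cent^{(k)}_{\cH}(\langle S\rangle)=\langle S\rangle$ for every $k$ when $2\notin\cB$) forces $H=S^{\ts j}$ for some $j\in\ZZ$, whence $G=R^{-1}S^{\ts j}=RS^{\ts j}$, which is an \emph{ordinary} reversing symmetry (an element of $\langle S\rangle\rtimes\langle R\rangle$). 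Thus no genuine $k$-reversing symmetry exists, and the full normaliser coincides with the normaliser.

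Putting these together, $\norm^{(\infty)}_{\cH}(\langle S\rangle)=\norm^{}_{\cH}(\langle S\rangle)=\cent^{}_{\cH}(\langle S\rangle)\rtimes\langle R\rangle=\langle S\rangle\rtimes\langle R\rangle\simeq\ZZ\rtimes C_2$, where the last isomorphism records that $R$ acts on $\ZZ=\langle S\rangle$ by inversion. I do not expect a real obstacle here: the only point that needs a line of care is the semi-direct product \emph{decomposition} of the normaliser itself (as opposed to its being generated by $\langle S\rangle$ and $R$), i.e.\ checking that $\langle S\rangle\cap\langle R\rangle=\{\id\}$ and that $\langle S\rangle$ is normal in $\norm^{}_{\cH}(\langle S\rangle)$ — both are immediate, the first because $S$ has infinite order while $R$ has order $2$, the second because $\langle S\rangle$ is the centraliser's own defining subgroup and any element of the normaliser conjugates $S$ to $S^{\pm1}$. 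The mild subtlety worth flagging is purely expository: one must confirm that the argument above uses the per-$k$ triviality $\cC^{(k)}_{\cH}=\langle S\rangle$ (valid for all $k$ when $2\notin\cB$, as noted after Theorem~\ref{thm:full-cent}) and not merely triviality of the union, but since the two coincide here this is automatic.
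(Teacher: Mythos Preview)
Your proposal is correct and follows essentially the same approach as the paper: the paper likewise notes that $\norm^{}_{\cH}(\langle S\rangle)=\cent^{}_{\cH}(\langle S\rangle)\rtimes\langle R\rangle$ via the reflection $R$, and then argues that the absence of genuine $k$-symmetries (Theorem~\ref{thm:full-cent}) forces the absence of genuine $k$-reversing symmetries, citing \cite{Lamb-2,Lamb-3} for the standard reduction. You have simply made that reduction explicit via the substitution $H=RG$, which is exactly the intended mechanism.
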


Using Theorem~\ref{thm:full-cent}, one can easily construct pairs of
Erd\H{o}s $\cB$-free shifts with (trivially) isomorphic full
centralisers whose topological entropies are rationally
\emph{independent}. More concretely, choosing
$\cB=\left\{p^2 : \,p \text{ prime} \right\}$ and
$\cB^{\prime}=\left\{p^4 : \,p \text{ prime} \right\}$,
Theorem~\ref{thm:full-cent} implies that they have isomorphic full
centralisers but
\begin{equation}\label{eq:not-rat}
  \frac{h_{\text{top}}(\XX)}{h_{\text{top}}(\XX^{\prime})} \, = \,
  \frac{\zeta(4)}{\zeta(2)} \, = \, \frac{\pi^2}{15}\notin
  \QQ \ts .
\end{equation}
Here, $\XX$ and $\XX^{\prime}$ are the shifts corresponding to $\cB$
and $\cB^{\prime}$, and $\zeta$ is Riemann's zeta function.

Clearly, we expect similar results to hold for suitable $\cB$-free
lattice systems, including power-free integers in algebraic number
fields (then turned into symbolic dynamical systems via the Minkowski
embedding). However, the entire concept of (extended) $k$-symmetries
requires substantial care, and is thus a topic for future
investigations.

\bigskip

\section*{Acknowledgements}

It is our pleasure to thank Fabian Gundlach and Stanis{\l}aw Kasjan
for helpful discussions and \'{A}lvaro Bustos and Jan Maz\'{a}\v{c}
for useful hints on the manuscript. This work was supported by the
German Research Foundation (DFG, Deutsche Forschungsgemeinschaft)
within the CRC TRR 358/1 (2023) -- 491392403 (Bielefeld -- Paderborn).

\end{document}